\theoremstyle{plain}
\newtheorem{thm}{Theorem}[section]
\newtheorem{theorem}[thm]{Theorem}
\newtheorem{lemma}[thm]{Lemma}
\newtheorem{proposition}[thm]{Proposition}
\numberwithin{equation}{section}
\title{Some approaches toward the Jacobian conjecture} 
\author{Tuyen Trung Truong}
\thanks{ The author was supported by Australian Research Council grants DP120104110 and DP150103442.}
\subjclass[2010]{}
\date{\today}
\address{School of Mathematical Sciences, The University of Adelaide, SA 5005, Australia}\email{tuyen.truong@adelaide.edu.au}
\begin{document}

\begin{abstract}
\noindent 
In this paper, we study a so-called Condition C1 and a weaker Condition C2. For Druzkowski maps Condition C2 is equivalent to the Jacobian conjecture. Main results obtained: 

- Stating new equivalent formulations of the Jacobian conjecture.

- Formulating some generalisations of the Jacobian conjecture  and giving both theoretical and experimental evidences to support them. 

- Showing Condition C1 holds for a generic matrix of any given rank, is an  invariant for a certain group action, and Condition C2 is an invariant for cubic similarity.  

- Giving one heuristic argument for the truth of the Jacobian Conjecture.

- Giving an effective (time saving) method to check whether a given Druzkowski map satisfies the Jacobian conjecture, explaining theoretically and checking on many examples including those previously considered by other authors.

- Proposing approaches toward resolving the Jacobian conjecture. Showing that a generic Druzkowski matrix satisfies the criteria of some of these approaches (see Theorem \ref{TheoremGeneric}), and hence expecting to be able to check these approaches for a given Druzkowski map very quickly. 

-As an application, proposing a strategy to use cubic similarity to check that Druzkowski maps of dimension $\leq 9$ satisfy the Jacobian conjecture. 
\end{abstract}

\maketitle


\section{Introduction}
The famous Jacobian Conjecture is the following statement:

{\bf Jacobian Conjecture.} Let $F=(F_1,\ldots ,F_n):\mathbb{C}^n\rightarrow \mathbb{C}^n$ be a polynomial map such that $JF$ (the Jacobian matrix $(\partial F_i/\partial x_j)_{1\leq i,j\leq n}$) is invertible at every point. Then, $F$ has a polynomial inverse. 

The Jacobian conjecture was first stated by Keller in 1939. Polynomial maps with invertible Jacobian matrices are called Keller maps. We denote by $JC(n)$ the Jacobian Conjecture in dimension $n$, and by $JC(\infty )$ the statement that $JC(n)$ holds for every $n$. In the literature $JC(\infty )$ is usually called the generalized Jacobian Conjecture. This conjecture has attracted a lot of works, and many partial results were found. For example,  Magnus - Applegate -Onishi -Nagata proved $JC(2)$ for $F=(P,Q)$ where the GCD of the degrees of $P,Q$ is either a prime number or $\leq 8$;  Moh proved $JC(2)$  for $\deg (F)\leq 100$; Wang proved that $JC(n)$ holds if $\deg (F)=2$; and Yu (also Chau and Nga) proved that if $F(X)-X$ has no linear term and has all non-positive coefficients then JC holds for $F$. For more details the readers can consult the reference list and the references therein. An excellent survey is the book \cite{essen}. We note that the $\mathbb{R}$-analog of $JC(2)$ (in this case, we require only that the map $F$ is bijective, since its inverse may not be a polynomial as the example $F(x)=x+x^3$ shows) is false, by the work of Pinchuk (Section 10 in \cite{essen}). 

There have been many reductions of the generalized Jacobian Conjecture $JC(\infty )$. One of these reductions is due to Bass, Connell, Wright and Yagzhev, who showed that to prove $JC(\infty )$, it is enough to prove for all   $F(x)=x+H(x)$ and all $n$, where $H(x)$ is a homogeneous polynomial of degree $3$ (Section 6.3 in \cite{essen}). Druzkowski made a further simplification (Section 6.3 in \cite{essen})  

\begin{thm}[Druzkowski]
$JC(\infty )$ is true if it is true for all the maps $F$ of the form $F(x)=(x_1+l_1(x)^3,\ldots ,x_n+l_n(x)^3)$ with invertible Jacobian $JF$, here $l_1,\ldots ,l_n$ are linear forms. 
\label{TheoremDruzkowski}\end{thm}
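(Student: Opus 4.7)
My plan is to combine the Bass--Connell--Wright--Yagzhev (BCWY) reduction stated just before Theorem \ref{TheoremDruzkowski} with an explicit lift from cubic--homogeneous maps to Druzkowski maps in a higher-dimensional space. The two steps are: (1) reduce $JC(\infty)$ to the case $F(x)=x+H(x)$ with $H$ homogeneous cubic, which is the BCWY reduction already cited; (2) associate to any such $F$ a Druzkowski map $\tilde F$ on $\mathbb{C}^{n+K}$ (for a suitable $K$) in such a way that (a) $F$ being Keller implies $\tilde F$ is Keller, and (b) $\tilde F$ being a polynomial automorphism implies $F$ is one. Given (a) and (b), if the Jacobian Conjecture holds for every Druzkowski map in every dimension, then from $F$ Keller we get $\tilde F$ Keller, hence $\tilde F$ is a polynomial automorphism by hypothesis, hence $F$ is a polynomial automorphism.

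The algebraic input for the lift is that over $\mathbb{C}$, the space of homogeneous cubic forms on $\mathbb{C}^n$ is spanned by cubes $\ell^{3}$ of linear forms (a consequence of the polarisation identity for symmetric $3$-tensors, with cube roots in $\mathbb{C}$ absorbing scalar coefficients). Hence, after padding with zero summands to a common length $s$, we may write
\[
H_i(x)=l_{i,1}(x)^{3}+l_{i,2}(x)^{3}+\cdots+l_{i,s}(x)^{3},\qquad 1\le i\le n,
\]
for linear forms $l_{i,j}$. Set $K=ns$, introduce auxiliary variables $y_{i,j}$ ($1\le i\le n$, $1\le j\le s$), and work in $\mathbb{C}^{n+K}$ with coordinates $z=(x,y)$.

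The construction of the lift $\tilde F$ will be of the form
\[
\tilde F(z)=z+(A\,z)^{*3}
\]
for a carefully chosen $(n+K)\times(n+K)$ matrix $A$, where $(\cdot)^{*3}$ denotes component-wise cubing; this form automatically makes every component of $\tilde F$ Druzkowski. One designs $A$ block-wise so that the $y_{i,j}$-component of $\tilde F$ is $y_{i,j}+l_{i,j}(x)^{3}$ (recording the individual cubes of $H_i$ in separate new coordinates), while the $x_i$-component is $x_i+\bigl(l'_i(y)\bigr)^{3}$ for a linear form $l'_i$ in the $y$-variables, chosen so that, together with elementary triangular polynomial automorphisms of $\mathbb{C}^{n+K}$ exploiting the relations $y_{i,j}\leftrightarrow l_{i,j}(x)$, the Druzkowski map $\tilde F$ can be matched to $F\times\mathrm{Id}_{\mathbb{C}^K}$ up to polynomial-automorphism conjugation. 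This matching is what yields both the Keller and the polynomial-automorphism equivalences.

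The hard part is the design of $A$ and the verification of implications (a) and (b): (a) reduces to a Jacobian determinant computation, where a Schur-complement expansion on the block structure of $J\tilde F$ should reduce $\det J\tilde F$ to a nonzero multiple of $\det JF$; (b) requires that the extra $y$-coordinates of the formal inverse of $\tilde F$ can be solved out polynomially in terms of the $x$-coordinates, yielding a polynomial inverse of $F$. Arranging both simultaneously while keeping $\tilde F$ rigidly of Druzkowski form (a single cube of a single linear form per component, never a sum) is where Druzkowski's specific construction is essential: the $K$ auxiliary variables act as placeholders for the $K$ individual cubes of $H$, effectively trading one cubic-homogeneous map with sum-of-cubes components for many Druzkowski components each containing only one cube.
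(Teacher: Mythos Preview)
The paper does not prove Theorem~\ref{TheoremDruzkowski}; it is quoted as a known reduction due to Druzkowski, with a reference to Section~6.3 of \cite{essen}. There is therefore no in-paper proof to compare against. Your proposal correctly identifies the two-step strategy---the BCWY reduction to $F(x)=x+H(x)$ with $H$ cubic homogeneous, followed by a lift to a cubic-linear map in more variables via a sum-of-cubes decomposition of each $H_i$---and this is indeed the shape of Druzkowski's original argument.

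However, what you have written is an outline, not a proof. You never specify the matrix $A$, and you explicitly label the design of $A$ and the verification of (a) and (b) as ``the hard part'' that ``is where Druzkowski's specific construction is essential''; that is a deferral, not an argument. The particular block shape you sketch (with $y_{i,j}\mapsto y_{i,j}+l_{i,j}(x)^3$ and $x_i\mapsto x_i+l'_i(y)^3$) does not obviously do the job: restricting to $y=0$ gives the identity on the $x$-coordinates rather than $F$, so the unspecified ``elementary triangular polynomial automorphisms'' must carry all the weight of relating $\tilde F$ to $F\times\mathrm{Id}$, and the promised Schur-complement reduction of $\det J\tilde F$ to $\det JF$ is nowhere in sight. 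If you want to turn the outline into an actual proof, a clean route is the explicit pairing between cubic-homogeneous and cubic-linear Keller maps due to Gorni and Zampieri \cite{gorni-zampieri} (also exploited elsewhere in the present paper), which supplies the missing $A$ together with direct verifications of both (a) and (b).
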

Later, Druzkowski \cite{druzkowski}) simplified even further showing that it is enough to show for the above maps with the additional condition that $A^2=0$, where $A$ is the $n\times n$ matrix whose $i$-th row is $l_i$. We then simply say that a matrix is Druzkowski if the corresponding map $F_A(x)=(x_1+l_1(x)^3,\ldots ,x_n+l_n(x)^3)$ is Keller, i.e. the determinant of its Jacobian is $1$.

A lot of efforts have been devoted to showing that the Druzkowski maps are polynomial automorphisms (Section 7.1 in \cite{essen} and for recent developments see  \cite{bondt-yan1,bondt-yan2,bondt-yan3}, and for a comprehensive reference on this topic see \cite{bondt}).  There are many partial results proved for this class of maps, for example it is known from the works of Druzkowski and Hubbers and many other people that the Druzkowski maps are polynomial automorphisms if either the rank of $A$ is $\leq 4$ or the corank of $A$ is $\leq 3$. In particular, the Jacobian conjecture was completely checked for Druzkowski maps in dimensions $\leq 8$ (see \cite{bondt}, also for stronger properties that can be proved for these polynomial automorphisms). Some new results on Druzkowski maps in dimension $9$ have been obtained recently in \cite{yan} and \cite{bondt-yan1,bondt-yan2,bondt-yan3}. A common theme of these proofs is that in these cases the Druzkowski maps are "equivalent" to some other polynomial maps for which the Jacobian conjecture is obviously true. It is not easy to see whether this strategy can work for higher ranks or coranks.

Despite these encouraging results, there has been little progress on checking the Jacobian conjecture for Druzkowski maps in higher dimensions. One of the difficulties for this lies in the fact that the structure of the Druzkowski maps is not well-understood, in the sense that for a large enough $n$ there is no easy way to produce all $n\times n$ Druzkowski matrices (for small values $n=3,4,5$, there are classifications by Meisters, Wright and Hubbers, see \cite{essen} and \cite{hubbers}). Because of this, even if we have already verified the Jacobian conjecture for the Druzkowski maps up to a certain dimension, say $n=10$, it is still hard to decide whether the Jacobian conjecture will hold for these maps in higher dimensions. A striking illustration of this undecidability is the following fact proved by Rusek \cite{rusek}, that the set of $n\times n$ Druzkowski matrices is not irreducible for $n\geq 530$. Similarly, some properties, which are known for some special classes of Druzkowski maps, do not hold for all of them. The same undecidability applies to the bigger class of Keller maps, since the structure of polynomial automorphisms (sought to be shown to be the same as the Keller maps) is not well-understood in dimensions $n\geq 3$. 

The main purpose of this paper is to propose some conditions (namely, Conditions 1 and 2 below) which are more amenable to using computer programs to check. The validity of these conditions for either a generic Druzkowski matrix or for all Druzkowski matrices with integer coefficients will prove the Jacobian conjecture. We also show, theoretically and via examples, that these conditions make the computations needed for the direct approach (that of showing Druzkowski maps are injective) a lot faster. Contrast to the case of dimension $2$, we note that the truth of the Jacobian conjecture in higher dimensions is not universally believed, the main reason is because of lack of enough evidence (for example the paper \cite{essen1} reported that the majority of participants of a conference in 1997 voted that the Jacobian conjecture is false). We hope that this paper will give some more support to the opposite conclusion and illustrate the idea that computers may be used in seeking of a solution to the Jacobian conjecture $JC(\infty )$. More precisely, the idea is to investigate the small dimensions using computer programs and then make an inductive guess. 

{\bf Heuristic argument for the truth of the Jacobian conjecture.} By the results proved later in this paper, it follows that Conditions C1 and C2 below hold for many square matrices, in particular for a dense set of all square matrices. Hence it is very reasonable that they are also true for a dense set of Druzkowski matrices, and the latter claim is enough for the truth of the Jacobian conjecture (see below). 

{\bf Remark on the effectiveness (time-saving) of our approaches.} In Theorem \ref{TheoremGeneric}  we will show that a generic Druzkowski matrix satisfies the criteria in Theorems \ref{TheoremFurtherReduction} and \ref{TheoremFurtherReduction1} below. Based on this we explain in Section 5.1 that we expect to be able to check whether a given Druzkowski matrix satisfies these criteria very quickly. This is a sign that our approaches seem very promising. 

The arguments in this paper are based, besides the Druzkowski reduction and Theorem \ref{TheoremFurtherReduction}  and Lemma \ref{Lemma1} to be stated below, on two other results on polynomial automorphisms. The first of these results belongs to Connell and van den Dries (Proposition 1.1.19 in \cite{essen}) :
\begin{proposition}
If for some $n\geq 2$ there is a counter example to $JC(n)$ then there is a counter example to $JC(N)$ (for some $N>n$) with integer coefficients.  
\label{PropositionConnellDries}\end{proposition}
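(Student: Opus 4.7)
The plan is to descend a counterexample over $\C$ first to a number field $K$ (via the Lefschetz principle), then rescale to put all coefficients in $\mathcal{O}_K$, and finally apply a Weil restriction of scalars $K \to \Q$ to produce an integer-coefficient counterexample in dimension $N = n[K:\Q]$.

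For fixed $n$ and a fixed degree bound, the existence of a Keller counterexample is first-order in the coefficients of $F$: Keller-ness is the polynomial identity $\det JF = 1$, and ``$F$ is not an automorphism'' is equivalent over an algebraically closed field of characteristic $0$ (for Keller maps) to the existence of a pair $a \neq b$ with $F(a) = F(b)$, by Ax--Grothendieck. The Lefschetz principle then transfers this sentence from $\C$ to $\overline{\Q}$, and, after possibly enlarging the field to absorb the witnesses $a, b$, I may assume that $F$, $a$, and $b$ are all defined over some number field $K$. By the Bass--Connell--Wright--Yagzhev reduction mentioned in the introduction, I may also take $F(x) = x + H(x)$ with $H$ homogeneous of degree $d \geq 2$.

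To move the coefficients into $\mathcal{O}_K$ I rescale. Expanding $\det(I + JH(x)) = 1 + \sum_{k=1}^{n} e_k(JH(x))$, where $e_k(JH(x))$ is homogeneous in $x$ of degree $k(d-1)$, the Keller identity forces each $e_k(JH) \equiv 0$ separately, since polynomials of distinct homogeneous degrees cannot cancel. Hence $\tilde F(x) := c^{-1}F(cx) = x + c^{d-1}H(x)$ stays Keller for every $c \in \C^*$, and being conjugate to $F$ by $x \mapsto cx$ it shares any non-injectivity with $F$. Picking $c \in \Z_{>0}$ so that $c^{d-1}$ kills the denominators of all coefficients of $H$ puts $\tilde F$'s coefficients in $\mathcal{O}_K$, while the witnesses $a/c, b/c$ remain in $K^n$.

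Finally, fix a $\Z$-basis $\omega_1, \dots, \omega_{d'}$ of $\mathcal{O}_K$ (with $d' = [K:\Q]$) and expand $\tilde F$ coordinate-wise in this basis. This yields a polynomial map $G : \Q^{nd'} \to \Q^{nd'}$ with integer coefficients. The standard identity $\det JG = N_{K/\Q}(\det J\tilde F) = 1$ shows $G$ is Keller, and the witnesses for $\tilde F$ unfold to witnesses for $G$, which therefore fails injectivity over $\Q \subset \C$; Ax--Grothendieck again implies $G$ is not an automorphism. This provides an integer counterexample in dimension $N = nd'$; if $d' = 1$ (so $K = \Q$ already), I append a trivial coordinate $x_{n+1}$ to force $N = n+1 > n$. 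The substantive step is the rescaling: preservation of the Keller property under scalar multiplication of the nonlinear part is not formal, and depends essentially on the homogeneity of $H$ to decouple $\sum_k e_k(JH) \equiv 0$ into the separate vanishings $e_k(JH) \equiv 0$; the Lefschetz transfer and the Weil restriction are then standard machinery once this decoupling is available.
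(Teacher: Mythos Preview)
The paper does not prove this proposition; it simply cites it as Proposition~1.1.19 of \cite{essen} (due to Connell and van den Dries). Your argument --- Lefschetz descent to a number field $K$, rescaling into $\mathcal{O}_K$, then Weil restriction along $\mathcal{O}_K/\Z$ --- is the standard one and is correct.

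One remark: your detour through the BCWY reduction to force $H$ homogeneous, and the accompanying claim that homogeneity is ``essential'' for the rescaling step, are both unnecessary. After an affine normalization so that $F(0)=0$ and $JF(0)=I$ (which preserves the field of definition and the Keller property), the conjugate $\tilde F(x)=c^{-1}F(cx)$ satisfies $J\tilde F(x)=(JF)(cx)$ by the chain rule, whence $\det J\tilde F\equiv 1$ immediately --- no separate vanishing of the $e_k(JH)$ is needed, and homogeneity plays no role. Choosing $c\in\Z_{>0}$ large enough still clears denominators term by term. The remainder of your argument is unaffected.
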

Applying this proposition, in order to prove the Jacobian conjecture for every dimensions, it suffices to do so for polynomials with integer coefficients. Then, by looking at the proof of the reductions by Bass, Connell, Wright, Yagzhev and Druzkowski (\cite{essen}), it is clear that we need to consider only Druzkowski matrices with rational coefficients. Then using some properties of these matrices (see the next section of this paper), we are reduced to consider only Druzkowski matrices with integer coefficients. 

The second of the results mentioned above is an invertibility criterion for polynomial maps using derivations (Section 3.1 in \cite{essen}). Here we briefly recall  this criterion. Let $F=(F_1,\ldots ,F_n):\mathbb{C}^n\rightarrow \mathbb{C}^n$ be a polynomial map such that $F(0)=0$  and $\det (JF)=1$. We then define related derivations by the formula
\begin{eqnarray*}
(\frac{\partial}{\partial F_1},\ldots ,\frac{\partial}{\partial F_n})^t:=((JF)^{-1})^{t}(\frac{\partial}{\partial x_1},\ldots ,\frac{\partial}{\partial x_n})^t.
\end{eqnarray*}  
(Here $(.)^t$ means the transpose of a matrix.) Now we add $n$ new variables $y_1,\ldots ,y_n$ and consider the derivation
\begin{eqnarray*}
D_F:=y_1\frac{\partial}{\partial F_1}+\ldots +y_n\frac{\partial}{\partial F_n}.
\end{eqnarray*}
 Let $d=\deg (F)$ and put $N=d^{n-1}+1$. We have (Proposition 3.1.4 in \cite{essen}) 
 \begin{proposition}
 $F$ has a polynomial inverse if and only if $D_F^{N}x_i=0$ for all $i=1,\ldots ,n$.
 \label{PropositionDensity}\end{proposition}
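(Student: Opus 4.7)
The plan is to base both directions on a single Taylor-expansion identity
\[
G_i(F_1+ty_1,\ldots,F_n+ty_n)=\sum_{k=0}^{\infty}\frac{t^{k}}{k!}\,D_F^{k}x_i,
\]
where $G=(G_1,\ldots,G_n)$ is the formal power series inverse of $F$. Because $F(0)=0$ and $\det(JF)=1$ is a nonzero constant, the formal inverse function theorem furnishes unique $G_i\in\mathbb{C}[[z_1,\ldots,z_n]]$ with $G_i(F_1,\ldots,F_n)=x_i$ in $\mathbb{C}[[x_1,\ldots,x_n]]$. A short computation from $JF\cdot (JF)^{-1}=I$ shows that the derivations $\partial/\partial F_j$ defined in the statement satisfy $\partial F_k/\partial F_j=\delta_{jk}$, so by the chain rule $\partial_{F_j}(P(F))=(\partial_{F_j}P)(F)$ for every $P\in\mathbb{C}[[z]]$. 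Iterating with $P=G_i$ yields
\[
D_F^{k}x_i=\sum_{j_1,\ldots,j_k}y_{j_1}\cdots y_{j_k}\,\bigl(\partial_{F_{j_1}}\cdots\partial_{F_{j_k}}G_i\bigr)(F),
\]
which is precisely the $t^{k}$-coefficient (times $k!$) of the usual multivariable Taylor expansion of $G_i(F+ty)$ around $F$, proving the identity.

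For the ``only if'' direction I would invoke the classical degree bound $\deg G_i\le d^{n-1}$ valid for any polynomial automorphism $F$ of degree $d$ (see \cite{essen}). All symbolic partials $\partial^{\alpha}G_i$ with $|\alpha|\ge N=d^{n-1}+1$ then vanish identically, and the identity immediately gives $D_F^{N}x_i=0$ for every $i$.

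For the ``if'' direction, assume $D_F^{N}x_i=0$. Because $D_F$ is a derivation, $D_F^{N+j}x_i=D_F^{j}(D_F^{N}x_i)=0$ for all $j\ge 0$, so the right-hand side of the identity collapses to a finite sum and $G_i(F+ty)$ is a polynomial in $t$ of degree $<N$ with coefficients in $\mathbb{C}[x,y]$. Specialising $x=0$ (so that $F=0$) gives $G_i(ty)=\sum_{k=0}^{N-1}\frac{t^{k}}{k!}(D_F^{k}x_i)|_{x=0}$; since each application of $D_F$ introduces exactly one $y$-factor, this coefficient of $t^{k}$ is homogeneous of degree $k$ in $y$. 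Matching $t$-powers with the homogeneous decomposition of the formal power series $G_i(z)=\sum_{k\ge 0}H_{i,k}(z)$ forces $H_{i,k}=0$ for $k\ge N$, so each $G_i$ is a polynomial of degree $<N$ and $F$ has polynomial inverse.

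The main obstacle lies in the careful manipulation of the formal derivations $\partial/\partial F_j$: one has to check that they genuinely behave as symbolic partial derivatives in the $F_j$ even though $F_1,\ldots,F_n$ are not independent coordinates but elements of $\mathbb{C}[x]$, and that substituting $z\mapsto F+ty$ into the formal power series $G_i$ is legitimate in $\mathbb{C}[[x,y,t]]$. Once these foundational points are secured, both implications reduce to extracting Taylor coefficients, the only nontrivial external input being the classical degree bound invoked in the forward direction.
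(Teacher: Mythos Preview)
The paper does not supply its own proof of this proposition; it is simply quoted as Proposition~3.1.4 from van den Essen's book \cite{essen}. So there is no argument in the paper to compare against.

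Your proof is correct and is essentially the standard argument one finds in \cite{essen}. The identity $G_i(F+ty)=\sum_{k\ge 0}\frac{t^k}{k!}D_F^{k}x_i$ is exactly the right organizing principle: since $\det(JF)=1$, the matrix $(JF)^{-1}$ has polynomial entries, so each $D_F^{k}x_i$ genuinely lies in $\mathbb{C}[x,y]$ (you use this implicitly when you say the coefficients are in $\mathbb{C}[x,y]$, and it is worth stating). Your verification that $\partial F_k/\partial F_j=\delta_{jk}$ and the chain rule $\partial_{F_j}(P(F))=(\partial_{z_j}P)(F)$ are the key computations, and your homogeneity observation---that each application of $D_F$ contributes exactly one $y$-factor---cleanly isolates the homogeneous components of $G_i$ after setting $x=0$. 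The only external ingredient, the degree bound $\deg G_i\le d^{n-1}$ for polynomial automorphisms, is indeed classical and also found in \cite{essen}. The foundational caveats you flag (legitimacy of substituting $F+ty$ into a formal power series, and treating the $\partial/\partial F_j$ as bona fide commuting derivations) are real but routine once one works in $\mathbb{C}[[x,y,t]]$ and uses that $F(0)=0$.
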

 
Applying this proposition, we see that to prove the Jacobian conjecture in every dimensions, it suffices to do so for a dense set of Druzkowski maps.  We briefly describe the proof of this claim. From the definition of $D_F$, we see that it is continuous on the set of $F:\mathbb{C}^n\rightarrow \mathbb{C}^n$ with a fixed degree $d$ (for Druzkowski maps $d=3$) and $\det (JF)=1$. Here we identify such a polynomial with its coefficients, and the topology is the usual one on the corresponding affine space. Hence if $F_j\rightarrow F$ with $\det (JF_j)=1$ for all $j$ and $D_{F_j}^Nx_i=0$ for all $j$ then $D_F^Nx_i$ is also zero.  
 
 We summarise these as the following
 \begin{theorem}
 For the Jacobian conjecture to hold in every dimensions, it suffices to either 
 
 i) Show that all Druzkowski maps whose matrix has integral coefficients satisfy the Jacobian conjecture,
 
 or 
 
 ii) Show that a dense set of Druzkowski maps  satisfies the Jacobian conjecture. 
  \label{TheoremDensityInteger}\end{theorem}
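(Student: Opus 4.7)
The plan is to read both statements as essentially immediate combinations of the three ingredients already put on the table: Druzkowski's reduction (Theorem \ref{TheoremDruzkowski}), the Connell--van den Dries descent to $\mathbb{Z}$ (Proposition \ref{PropositionConnellDries}), and the derivation criterion (Proposition \ref{PropositionDensity}). The task is then really just to check that the continuity/descent steps behave well on the Druzkowski subvariety.

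For part (i), the chain of reductions is the following. Suppose $JC(\infty)$ fails. By Proposition \ref{PropositionConnellDries} there is an integer-coefficient counterexample $F$ in some dimension $N$. By the Bass--Connell--Wright--Yagzhev reduction followed by Druzkowski's theorem (Theorem \ref{TheoremDruzkowski}), we may replace $F$ by a Druzkowski map $F_A$ in some higher dimension that is still a counterexample; inspection of those reductions (which only involve stacking, permuting and taking rational linear combinations of the original data) shows that the resulting matrix $A$ has rational entries. The final step is to pass from rational to integer coefficients. The idea I would use is the diagonal rescaling $x_i \mapsto \lambda_i x_i$: if $A$ is a Druzkowski matrix and $D=\mathrm{diag}(\lambda_1,\ldots,\lambda_n)$ is invertible, then $DAD^{-1}$ is still Druzkowski (since cubic similarity by a diagonal matrix preserves the Druzkowski form, as will be reviewed in the next section of the paper), and by choosing the $\lambda_i$ to clear denominators, the entries can be made integral. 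The Keller property and invertibility status are invariant under this conjugation, so an integer-coefficient Druzkowski counterexample results, contradicting the hypothesis of (i).

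For part (ii), I would argue by continuity on the affine space of degree-$3$ polynomial maps with $F(0)=0$ and $\det(JF)=1$. Fix $n$, let $d=3$ and $N=d^{n-1}+1$ as in Proposition \ref{PropositionDensity}, and consider the evaluation map
\begin{equation*}
\Phi_i \colon F \longmapsto D_F^N x_i,
\end{equation*}
viewed as a map from the affine variety $\mathcal{V}$ of Druzkowski maps (a closed subset of the affine space of coefficients) to the space of polynomials in $x_1,\ldots,x_n,y_1,\ldots,y_n$ of a uniformly bounded degree. The key observation is that $(JF)^{-1}$ has entries that are polynomials in the coefficients of $F$ and in $x$ with a uniform degree bound, because on $\mathcal{V}$ the determinant $\det(JF)$ is identically $1$, so $(JF)^{-1}$ is just the adjugate; therefore each coefficient of the polynomial $D_F^N x_i$ depends polynomially, hence continuously, on $F \in \mathcal{V}$. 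If $\Phi_i$ vanishes on a dense subset of $\mathcal{V}$, it vanishes identically, and then Proposition \ref{PropositionDensity} delivers a polynomial inverse for every Druzkowski $F$. Combined with Druzkowski's theorem this gives $JC(\infty)$.

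The only real subtlety, and the step I would be most careful about, is the rational-to-integer reduction in (i): one has to exhibit a transformation that preserves both the Druzkowski form and the Keller condition while clearing denominators, and the diagonal cubic similarity above is the cleanest option. Everything else is bookkeeping on top of the quoted propositions.
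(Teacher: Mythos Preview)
Your proof follows exactly the paper's route: part (i) is the chain Connell--van den Dries $\Rightarrow$ rational Druzkowski via the BCWY/Druzkowski reductions $\Rightarrow$ integer Druzkowski via a diagonal rescaling, and part (ii) is continuity of $F\mapsto D_F^N x_i$ on the Keller locus combined with Proposition~\ref{PropositionDensity}. Your version of (ii) is in fact slightly more explicit than the paper's, since you spell out why $(JF)^{-1}$ is polynomial in the coefficients (adjugate over determinant $1$).

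One correction in (i): the diagonal transformation that preserves the Druzkowski form is $A\mapsto DAD^{-3}$, not $DAD^{-1}$. Linear similarity $DAD^{-1}$ does \emph{not} in general keep $F_B(x)=x+(Bx)^3$ a Keller map, whereas $B=DAD^{-3}$ does (this is precisely the cubic similarity with $L=D^{-3}$ that you allude to, and is the content of Theorem~\ref{TheoremGroupAction}). For clearing denominators the simplest instance suffices: take $D=c^{-1/2}I$ for a scalar $c$, so that $DAD^{-3}=cA$; choosing $c$ a common denominator of the entries of $A$ yields an integer Druzkowski matrix, and $F_{cA}$ is linearly conjugate to $F_A$ over $\C$, hence a counterexample iff $F_A$ is. With this fix your argument is complete and matches the paper's.
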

 
 {\bf Remarks.} While the reduction to integer coefficients have been studied quite extensively (e.g. Section 10.3 in \cite{essen}, and also for the related topic of Jacobian conjecture in positive characteristics), it seems that the reduction to a dense set of maps has not been widely discussed. (Note that in \cite{cima-gasull-llibre-manosas}, a good dense subset of all real polynomial maps whose Jacobian is invertible everywhere was identified.) Here we illustrate one simple use of its.  Yan \cite{yan} showed that Druzkowski maps whose matrices $A=(a_{ij})_{1\leq i,j\leq n}$ are of dimension $n\leq 9$ and $\prod _{i=1}^{n}a_{i,i}\not= 0$ satisfy the Jacobian conjecture. Hence if we can show that this set is dense in the set of Druzkowski maps of dimension $\leq 9$ then the Jacobian conjecture is true for all Druzkowski maps of dimension $\leq 9$. We will discuss more on this in Section 5. 
\subsection{Main results}
Let us fix some notations to be used throughout the paper.

{\bf Notations.} We will use the following notations. For  vectors $u,v\in \mathbb{C}^n$, we define $u*v:=(u_1v_1,\ldots ,u_nv_n)$ (coordinate-wise multiplication) and $u^k=u*u*\ldots *u$ ($k$-th self-multiplication of $u$), and we define by $\Delta [u]$ the diagonal $n\times n$ matrix whose $(i,i)$-th entry is $u_i$. Thus, the Druzkowski maps and their Jacobians can be written as
\begin{eqnarray*}
F(x)&=&x+(A.x)^3,\\
JH(x)&=&Id+3\Delta [(A.x)^2].A.
\end{eqnarray*} 

For a fix $n$, we let $\mathcal{M}_n$ denote the set of all $n\times n$ matrices with coefficients in $\mathbb{C}$. We also use the following notations: 
\begin{eqnarray*}
V_n&:=&\{(y,z,A)\in \mathbb{C}^n\times \mathbb{C}^n\times \mathcal{M}_n:~~\det (Id+\Delta [(sz+ty)^2].A)=1 ~\forall s,t\in \mathbb{C},\\&&~z+A.(z^3+z*y^2)=0\},\\
W_n&:=&\{(y,z,A)\in \mathbb{C}^n\times \mathbb{C}^n\times \mathcal{M}_n:~\det (Id+\Delta [(sz+tA.y)^2].A)=1 \forall s,t\in \mathbb{C},\\
&&~z+A.(z^3+z*(A.y)^2)=0\}.
\end{eqnarray*}
Note that while these two varieties look very similar, they are different in that in $V_n$ only $y$ appears in the equations and in $W_n$ only $A.y$ appears in the equations. Note also that for a Druzkowski matrix $A$, then in $W_n$ we do not need to check the condition $\det (Id+\Delta [(sz+tA.y)^2].A)=1$.

The starting point of our work is the following, to be derived in Section 2.
\begin{theorem}
Let $A$ be an $n\times n$ matrix and $F_A(x)=x+(A.x)^3~:\mathbb{C}^n\rightarrow \mathbb{C}^n$  the corresponding cubic linear map. Then $F_A$ is an automorphism if and only if the solutions to $z+A.(z^3)+A.(z*(A.y)^2)=0$ are exactly $z=0$.
\label{TheoremInjectiveCriterion}\end{theorem}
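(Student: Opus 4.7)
My plan is to decompose the claim into two parts: a reduction from ``automorphism'' to ``injective'', and a direct computation that converts the injectivity condition into the stated equation.

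For the reduction, I would invoke the theorem of Ax (an injective polynomial endomorphism of $\mathbb{C}^n$ is surjective) together with Zariski's main theorem, which forces the resulting bijection to have polynomial inverse. Thus $F_A$ is a polynomial automorphism if and only if it is injective.

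To analyze injectivity I would parametrize candidate pairs $(x,x')$ with $F_A(x)=F_A(x')$ by their midpoint $u$ and half-difference $v$: write $x=u+v$, $x'=u-v$, so $x\ne x'$ iff $v\ne 0$. Using the coordinate-wise identity $(a+b)^{3}-(a-b)^{3}=2(3a^{2}{*}b+b^{3})$ with $a=Au$ and $b=Av$, a direct expansion gives
\[
F_A(u+v)-F_A(u-v)=2v+6(Au)^{2}{*}Av+2(Av)^{3}.
\]
Hence $F_A$ is injective if and only if, for every $(u,v)\in\mathbb{C}^{n}\times\mathbb{C}^{n}$, the implication
\[
(\star)\qquad v+3(Au)^{2}{*}Av+(Av)^{3}=0 \ \Longrightarrow\ v=0
\]
holds.

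The remaining step is to transfer $(\star)$ to the stated equation. I would apply $A$ to the left side of $(\star)$ and put $w:=Av$ to get $w+3A(w{*}(Au)^{2})+A(w^{3})=0$; the substitution $y:=\sqrt{3}\,u$ satisfies $(Ay)^{2}=3(Au)^{2}$ and turns the equation into $w+A(w^{3})+A(w{*}(Ay)^{2})=0$, which is exactly the statement of the theorem. A witness $v\ne 0$ of $(\star)$ produces $w=Av\ne 0$, because $Av=0$ would make $(\star)$ collapse to $v=0$; so it yields a non-trivial solution of the stated equation. Conversely, from a non-trivial solution $(y,w)$ of the stated equation I would set
\[
v := -w^{3}-w{*}(Ay)^{2};
\]
the stated equation gives $Av=w$ immediately, so $(u,v)=(y/\sqrt{3},\,v)$ satisfies $(\star)$, and $v=0$ would force $w^{3}+w{*}(Ay)^{2}=0$, hence $w=0$, a contradiction.

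The main subtlety is this last converse: because $A$ can have a non-trivial kernel, one cannot in general invert the operation $v\mapsto Av$, so one has to exhibit $v$ from $w$ by an explicit formula. The formula above works precisely because the stated equation itself displays $w$ as $A$ applied to $-v$. Combined with the Ax/Zariski reduction to injectivity, this equivalence proves the theorem.
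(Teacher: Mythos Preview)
Your argument is correct and follows essentially the same route as the paper: reduce ``automorphism'' to ``injective'' via a standard result, rewrite $F_A(x)=F_A(x')$ as a single cubic equation in two vector variables, and then pass to the $z$-equation by applying $A$ and exhibiting an explicit preimage for the converse (this last step is exactly the paper's Lemma~\ref{Lemma1}, applied with $Ay$ in place of $y$).

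The only difference worth noting is cosmetic: the paper sets $x=u-v$ and then completes the square via the substitution $y=\sqrt{3}\,v+\tfrac{\sqrt{3}}{2}x$ followed by a rescaling of $x$, whereas your midpoint/half-difference parametrization kills the cross term from the start and reaches $v+(Av)^{3}+(Av)*(Ay)^{2}=0$ in one step. One small remark on presentation: in your converse, the clause ``$v=0$ would force $w^{3}+w*(Ay)^{2}=0$, hence $w=0$'' is correct but the ``hence'' relies on either substituting back into the stated equation or, more simply, on the already-established identity $Av=w$ (so $v=0\Rightarrow w=0$); you may want to make that explicit.
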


Based on this result, we are now ready to state the two conditions. 

{\bf Condition C1.}  An $n\times n$  matrix $A$ satisfies Condition C1 if whenever $(y,z,A)\in V_n$ then $z=0$.

{\bf Condition C2.} An $n\times n$  matrix $A$ satisfies Condition C2 if whenever $(y,z,A)\in W_n$ then $z=0$.

Condition C2 is clearly weaker than Condition C1, and for Druzkowski matrices it will be seen later that Condition C2 and the Jacobian conjecture are equivalent. From both theoretical and experimental considerations, it is reasonable to conjecture that all Druzkowski matrices satisfy Condition 1 (see the Main Conjecture in the next section for more details). Also we conjecture that all $n\times n$ matrices with integer entries satisfy Conditions 1 and 2. Even stronger, for support see in particular Theorem \ref{TheoremCubicSimilarity}, we conjecture that all matrices satisfy Condition 2.  

We first observe a simple way to construct more matrices satisfying Conditions 1 and 2. 
\begin{theorem}
Let $A$ be of the block form 
\[ \left( \begin{array}{cc}
A_{1,1}&A_{1,2}\\
0&0
\end{array}\right) \]
where $A_{1,1}$ is a square matrix. Then $A$ satisfies Condition 1 (or Condition 2) if and only if $A_{1,1}$ is so. 
\label{TheoremReduction}\end{theorem}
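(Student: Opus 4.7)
The argument is a direct block-matrix calculation. Split $y=(y',y'')$ and $z=(z',z'')$ with $y',z'\in\C^k$ matching the blocks of $A$.

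For Condition C1, the last $n-k$ rows of $A$ vanish, so the last $n-k$ components of the equation $z+A.(z^3+z*y^2)=0$ read $z''=0$; substituting this, the first $k$ components collapse to $z'+A_{1,1}((z')^3+z'*(y')^2)=0$, in which $y''$ does not appear. Likewise $Id+\Delta[(sz+ty)^2].A$ is block upper-triangular with $Id_{n-k}$ in the lower-right corner, so its determinant equals $\det(Id_k+\Delta[(sz'+ty')^2].A_{1,1})$. Hence $(y,z,A)\in V_n$ iff $z''=0$ and $(y',z',A_{1,1})\in V_k$ (with $y''$ free), and the C1 equivalence follows by unwinding the definitions.

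For Condition C2, replace $y$ by $A.y=(A_{1,1}y'+A_{1,2}y'',0)$ and set $\tilde u:=A_{1,1}y'+A_{1,2}y''$; the same block manipulations give $z''=0$, the reduced equation $z'+A_{1,1}((z')^3+z'*\tilde u^2)=0$, and the determinant identity $\det(Id_k+\Delta[(sz'+t\tilde u)^2].A_{1,1})=1$ for all $s,t$. The ``$\Rightarrow$'' direction is immediate: from $(y^*,z^*,A_{1,1})\in W_k$ the extension $((y^*,0),(z^*,0),A)$ lies in $W_n$ with the same reduced data, so C2 for $A$ gives $z^*=0$. For the converse, given $(y,z,A)\in W_n$, one would like to pick $y^*\in\C^k$ with $A_{1,1}y^*=\tilde u$, so that $(y^*,z',A_{1,1})\in W_k$ and C2 for $A_{1,1}$ forces $z'=0$.

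The main obstacle is the existence of such a $y^*$, which requires $\tilde u\in\mathrm{col}(A_{1,1})$; however $\tilde u$ can a priori range over the larger space $\mathrm{col}(A_{1,1})+\mathrm{col}(A_{1,2})$ as $y''$ varies. To handle the case $\tilde u\notin\mathrm{col}(A_{1,1})$, I plan to exploit the determinant identity, which (by replacing $(s,t)$ with $(rs,rt)$ to produce a polynomial identity in $r$) forces the matrix $\Delta[(sz'+t\tilde u)^2].A_{1,1}$ to be nilpotent at every $(s,t)\in\C^2$; together with the observation from the first reduced equation that $z'\in\mathrm{col}(A_{1,1})$, this should be strong enough to rule out any nonzero $z'$ paired with $\tilde u\notin\mathrm{col}(A_{1,1})$. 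Translating this analysis into a clean reduction --- effectively showing that C2 for $A_{1,1}$ is equivalent to its apparently stronger form in which $\tilde u$ is allowed to range over all of $\C^k$ --- is the technical heart of the proof.
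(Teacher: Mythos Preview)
Your treatment of Condition~1 is correct and coincides with the paper's proof: the paper also reduces (by induction) to the case where the bottom row vanishes, reads off $z_n=0$ from the last equation, reduces the remaining equations to the $A_{1,1}$ system, and handles the determinant condition via the block-triangular form (equivalently, nilpotency of $\Delta[(sz+ty)^2].A$).

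For Condition~2, however, your plan contains a genuine gap, and it is worth noting that the paper itself gives no separate argument for this case (it writes ``We prove for example for Condition~1'' and stops). You have correctly isolated the obstruction: in the direction ``$A_{1,1}$ satisfies C2 $\Rightarrow$ $A$ satisfies C2'', the vector $\tilde u=A_{1,1}y'+A_{1,2}y''$ need not lie in $\mathrm{col}(A_{1,1})$, so one cannot produce $y^*$ with $A_{1,1}y^*=\tilde u$. Your proposed remedy is to show that C2 for $A_{1,1}$ already implies the ``stronger form'' in which $\tilde u$ is allowed to range over all of $\C^k$. But that stronger form is \emph{exactly} Condition~C1 for $A_{1,1}$: compare your reduced system
\[
z'+A_{1,1}\bigl((z')^3+z'*\tilde u^2\bigr)=0,\qquad \det\bigl(Id_k+\Delta[(sz'+t\tilde u)^2].A_{1,1}\bigr)=1\ \ \forall s,t,
\]
with the definition of $V_k$. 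Thus your ``technical heart'' is the implication C2 $\Rightarrow$ C1 for an arbitrary square matrix, which is precisely one of the open conjectures discussed in the paper (see the Main Conjecture and the remarks in Section~5.1). The nilpotency of $\Delta[(sz'+t\tilde u)^2].A_{1,1}$ and the observation $z'\in\mathrm{col}(A_{1,1})$ are already built into the definition of $V_k$ and do not by themselves force $z'=0$; indeed Section~4.3 exhibits $(y,z,A_{1,1})\in V_4$ with $z\neq 0$.

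In fact the obstruction is sharp: if some $B$ satisfies C2 but not C1, pick $(y_0,z_0,B)\in V_k$ with $z_0\neq 0$, set $A_{1,1}=B$, let $A_{1,2}$ have $y_0$ as a column, and take $y=(0,e_1)$, $z=(z_0,0)$. Then $(y,z,A)\in W_n$ with $z\neq 0$, so $A$ fails C2 while $A_{1,1}$ satisfies it. Hence the C2 statement in the theorem is \emph{equivalent} to the open implication C2 $\Rightarrow$ C1. Your proof for C1 is fine; for C2 you should either flag this equivalence explicitly, or restrict the claim to the direction ``$A$ satisfies C2 $\Rightarrow$ $A_{1,1}$ satisfies C2'', which your extension argument does establish.
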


For Condition 2 only, we show that it is satisfied for a matrix $A$ iff it is satisfied for all matrices which are cubic similar to $A$. Cubic similarity has been used in the study of Druzkowski matrices, see e.g. \cite{hubbers}. We recall that two $n\times n$ matrices $A$ and $B$ are called cubic similar if there is an invertible matrix $L$ such that $(Bx)^3=L^{-1}(ALx)^3$ for all $x\in \mathbb{C}^n$.
\begin{theorem}
A matrix $A$ satisfies Condition 2 iff any matrix $B$ cubic similar to it also satisfies Condition 2. 
\label{TheoremCubicSimilarity}\end{theorem}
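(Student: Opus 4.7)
The plan is to establish an explicit correspondence between triples in $W_n$ for $B$ and triples in $W_n$ for $A$, leveraging the polynomial identities implicit in the cubic similarity relation. First, I would check that cubic similarity is symmetric: if $(Bx)^3 = L^{-1}(ALx)^3$ for all $x$, substituting $x = L^{-1}y$ gives $(Ay)^3 = L(BL^{-1}y)^3 = (L^{-1})^{-1}(BL^{-1}y)^3$, so $A$ is cubic similar to $B$ via $L^{-1}$. Hence it suffices to show that if $A$ satisfies Condition C2 and $B$ is cubic similar to $A$ via some invertible $L$, then $B$ satisfies C2.

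The key computational tool is a family of polynomial identities obtained by differentiating $(Bx)^3 = L^{-1}(ALx)^3$ in $x$. Replacing $x$ by $x + \lambda w$ and comparing coefficients of $\lambda$ yields, for all $x, w \in \C^n$,
\[
L\,\Delta[(Bx)^2]\,B = \Delta[(ALx)^2]\,AL,
\]
equivalently $L(Id + \Delta[(Bx)^2]B)L^{-1} = Id + \Delta[(ALx)^2]A$, so the two matrices are conjugate and in particular have equal determinants. Polarising (setting $x \mapsto x + \lambda w + \mu v$ and extracting the coefficient of $\lambda\mu$) gives $L\bigl((Bx)*(Bw)*(Bv)\bigr) = (ALx)*(ALw)*(ALv)$, whose specialisations include $L(Bz)^3 = (ALz)^3$ and $L\bigl((Bz)*(By)^2\bigr) = (ALz)*(ALy)^2$.

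Given $(y, z, B) \in W_n$, I would construct a triple $(y^*, z^*, A) \in W_n$ with $z = 0$ iff $z^* = 0$; Condition C2 for $A$ then forces $z^* = 0$, hence $z = 0$. The natural first guess is $y^* := Ly$ and $z^* := Lz$, for which $z = 0$ iff $z^* = 0$ by invertibility of $L$. The determinant condition $\det(Id + \Delta[(sz^* + tAy^*)^2]A) = 1$ for all $s, t$ should then be deducible from the conjugation identity above, by writing $sz^* + tAy^* = sLz + tALy$ and matching with the $B$-side vectors $sz + tBy$ via a reparametrisation obtained from the polarised identities.

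The main obstacle is matching the defining equation $z + B(z^3 + z*(By)^2) = 0$ with its $A$-counterpart $z^* + A((z^*)^3 + (z^*)*(Ay^*)^2) = 0$. The difficulty is that the linear map $B$ applied to the coordinate-wise cube $z^3$ is not directly controlled by the differentiated cubic similarity identities, which govern cubes of $Bx$, namely $(Bx)^3$, rather than $B$ applied to cubes such as $B(z^3)$. I expect the resolution to require a more refined substitution than simply $z^* = Lz$ (one possibility is to define $z^*$ implicitly so that the diagonal vector $(z^*)^2 + (Ay^*)^2$ matches $z^2 + (By)^2$ via the identity $\Delta[(ALx)^2]A = L\Delta[(Bx)^2]BL^{-1}$), or a reformulation of the defining equation in a form manifestly preserved by the map-level conjugacy $F_A \circ L = L \circ F_B$, so that the invariance of Condition C2 under cubic similarity becomes a consequence of the conjugacy of the underlying Druzkowski maps. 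This is the step I expect to require the most care; once it is in place, the reverse direction follows by the symmetry of cubic similarity noted at the outset.
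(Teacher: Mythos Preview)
Your proposal assembles all the right ingredients---symmetry of cubic similarity, the Jacobian identity $L\,\Delta[(Bx)^2]B=\Delta[(ALx)^2]AL$, and the polarised form $L\bigl((Bu)*(Bv)*(Bw)\bigr)=(ALu)*(ALv)*(ALw)$---but it stops precisely at the point where the argument is decided. You correctly diagnose the obstacle: the $W_n$-equation $z+B(z^3+z*(By)^2)=0$ involves $B$ applied to a coordinatewise cube, and cubic similarity controls $(Bx)^3$, not $B(z^3)$. Your candidate $z^*=Lz$ does \emph{not} satisfy the $A$-equation, and the two speculative fixes you mention are left as hopes rather than steps.

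The missing idea is already in the paper as Lemma~\ref{Lemma1}: pass from the $z$-formulation of Condition~C2 to the equivalent $x$-formulation. Concretely, given $(y,z,B)\in W_n$, set $x:=-(z^3+z*(By)^2)$; the defining equation gives $Bx=z$, and one checks $x+(Bx)^3+(Bx)*(By)^2=0$, while the determinant condition reads $\det(Id+\Delta[(sBx+tBy)^2]B)=1$. In this form every occurrence of $B$ is through $Bx$ or $By$, and your polarised identities transfer the system verbatim to $x':=Lx$, $y':=Ly$:
\[
x'+(Ax')^3+(Ax')*(Ay')^2=0,\qquad \det\bigl(Id+\Delta[(sAx'+tAy')^2]A\bigr)=1.
\]
Condition~C2 for $A$ gives $x'=0$, hence $x=0$, hence $z=Bx=0$. (If you wish to land back in $W_n$ for $A$, take $z':=Ax'$; then $z'=0\iff x'=0\iff x=0\iff z=0$, so the substitution you were looking for is $z^*=ALx$, not $Lz$.)

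This is exactly the paper's proof: it begins by restating Condition~C2 in the $x$-variables and then the transfer is a two-line computation. Your outline would become a complete proof once you insert this one step; without it, the argument has a genuine gap.
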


We recall that given an $n\times n$ matrix of $A$, a principal minor of $A$ is the determinant of a matrix obtained from $A$ by deleting $k$ rows $i_1,\ldots ,i_k$ and $k$ columns $i_1,\ldots ,i_k$ (here the indices of the rows and columns are the same).  
\begin{theorem}
Condition C1 holds for an $n\times n$ matrix $A$ in the following cases: 

1) $n=2,3$;

or 

2) $A$ has rank $1$;

or

3) $A$ is upper or lower triangular;

or 

4) All the principal minors of $A$ are non-zero. In particular, the set of matrices $A$ in $\mathcal{M}_n$ for which Condition C1 holds is {dense} in $\mathcal{M}_n$. 
\label{Theorem1}\end{theorem}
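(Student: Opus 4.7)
The plan is to expand the determinant condition defining $V_n$ using the classical principal-minor identity
\[
\det(Id + \Delta[u]\cdot A) \;=\; \sum_{I\subseteq\{1,\ldots,n\}}\Bigl(\prod_{i\in I} u_i\Bigr)\det(A[I]),
\]
where $A[I]$ denotes the principal submatrix of $A$ with rows and columns indexed by $I$. Substituting $u_i=(sz_i+ty_i)^2$ and requiring the result to equal $1$ identically in $s,t$, and separating by total degree in $(s,t)$, yields for each $k=1,\ldots,n$ the homogeneous polynomial identity
\[
H_k(s,t)\;:=\;\sum_{|I|=k}\det(A[I])\prod_{i\in I}(sz_i+ty_i)^2\;\equiv\;0.
\]
Specializing $t=0$ extracts the pure-$z$ identities $\sum_{|I|=k}\det(A[I])\prod_{i\in I}z_i^2=0$; together with the equation $z+A(z^3+z*y^2)=0$, these are the only tools needed.

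For part (4), set $S=\{i:z_i\neq 0\}$ and take $k=|S|$ in the pure-$z$ identity. The only subset of size $|S|$ for which $\prod_{i\in I}z_i^2\neq 0$ is $I=S$ itself, so $\det(A[S])\prod_{i\in S}z_i^2=0$, and the hypothesis that all principal minors are non-zero forces $S=\emptyset$, i.e.\ $z=0$. Density of (4) in $\mathcal{M}_n$ is then immediate: each principal minor is a polynomial in the entries of $A$, non-vanishing is a Zariski-open condition, and the identity matrix shows the intersection of these open sets is non-empty. For part (3), triangularity gives $\det(A[I])=\prod_{i\in I}a_{ii}$, so the determinant condition factorises as $\prod_{i=1}^n(1+a_{ii}(sz_i+ty_i)^2)=1$ in $\mathbb{C}[s,t]$; since the units of $\mathbb{C}[s,t]$ are only non-zero constants and each factor has constant term $1$, each factor must be identically $1$, which gives for every $i$ that either $a_{ii}=0$ or $z_i=y_i=0$. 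A descending induction on $i$ applied to the second equation (which at stage $i$, by the induction hypothesis, reduces to $z_i(1+a_{ii}(z_i^2+y_i^2))=0$) then closes the case.

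For part (2), write $A=vw^t$ of rank one. The second equation yields $z=\lambda v$ for some scalar $\lambda$, while the only non-trivial piece $H_1=\sum_i v_iw_i(sz_i+ty_i)^2\equiv 0$, after the substitution $z_i=\lambda v_i$ and comparison of the coefficients of $s^2$, $st$, $t^2$, produces the relations $\lambda^2\sum v_i^3w_i=0$, $\lambda\sum v_i^2w_iy_i=0$, $\sum v_iw_iy_i^2=0$. Inserting these into the self-consistency identity $\lambda=-\lambda^3\sum w_iv_i^3-\lambda\sum w_iv_iy_i^2$, derived from $\lambda=-\sum w_iz_i(z_i^2+y_i^2)$, forces $\lambda=0$, and hence $z=0$.

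Part (1) proceeds by case analysis on $\rk A$, reducing each sub-case to (2), (3), or a direct self-contained computation. For $n=2$, the identity $H_2=\det(A)(sz_1+ty_1)^2(sz_2+ty_2)^2\equiv 0$ splits into either $\det A=0$ (handled by (2)) or $(z_i,y_i)=(0,0)$ for some $i$, and the residual scalar component of the second equation, combined with $H_1\equiv 0$, then forces $z=0$. For $n=3$, the analogue $H_3\equiv 0$ gives either $\det A\neq 0$ with some coordinate pair vanishing (reducing to $n=2$) or $\det A=0$; in the latter sub-case one uses the pure-$z$ identity for $k=2$, the pattern of vanishing of the three $2\times 2$ principal minors, and (where necessary) the second equation to pin down both $\rk A$ and the support of $z$, eventually reducing to (2) or to a direct finite computation. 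The main obstacle is this last sub-case of (1): a careful bookkeeping of the rank/support configurations for $n=3$, especially those where the $H_k$ identities degenerate (for instance, when all $2\times 2$ principal minors vanish but $\rk A=2$) and one must invoke the second equation to conclude.
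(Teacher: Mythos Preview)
Your arguments for parts (2) and (3) are essentially the paper's own: the rank-one case is handled by writing the rows as scalar multiples of a fixed row and substituting, and the triangular case by factorising the determinant condition and then back-solving the system $z+A(z^3+z*y^2)=0$ from the last coordinate upward.

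For part (4) you take a genuinely different and cleaner route. The paper does \emph{not} use the principal-minor expansion directly; instead it invokes the group action $A\mapsto DAD^{-3}$ of Theorem~\ref{TheoremGroupAction} to normalise any $(y,z,A)\in V_n$ so that $z=Z_k=(1,\ldots,1,0,\ldots,0)$, and then observes that the $k\times k$ block $A_{1,1}$ must be nilpotent (from $\Delta[z^2]A$ nilpotent), which is impossible if all principal minors of $A$ are non-zero. Your argument bypasses the group action entirely: you read off from the single degree-$|S|$ homogeneous piece of the determinant identity at $t=0$ that $\det(A[S])\prod_{i\in S}z_i^2=0$, which kills $z$ in one stroke. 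This is more elementary and self-contained. The paper's approach, on the other hand, sets up the machinery (the normalisation $z=Z_k$) that is reused throughout Section~3.2 and in Theorems~\ref{TheoremFurtherReduction}--\ref{TheoremGeneric}, so it buys more for the rest of the paper.

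For part (1) the paper takes a completely different route: it does \emph{not} attempt any case analysis by hand, but simply declares that the cases $n=2,3$ are verified by a Gr\"obner-basis computation in Mathematica (Section~4). Your $n=2$ argument is complete and correct. Your $n=3$ outline, however, is not actually a proof: you reduce to the sub-case $\det A=0$, $\rk A=2$, and then say one ``eventually'' reduces to (2) or a ``direct finite computation'', while acknowledging that the degenerate configurations (e.g.\ all $2\times 2$ principal minors vanishing with $\rk A=2$) are the ``main obstacle''. That obstacle is real: in those configurations the identities $H_k\equiv 0$ give you almost nothing, and the second equation alone is a genuine system in $(y,z)$ that has to be solved. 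If you want a hand proof here you must actually carry out that bookkeeping; otherwise you are in the same position as the paper and should appeal to a computer check.
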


Part 4) of the above Theorem shows that Condition C1 holds for a generic square matrix of any dimension. However, these matrices are invertible, and hence are of not much interest to the question of whether the Druzkowski maps satisfy the Jacobian conjecture (this is because the Druzkowski matrices all have determinant $0$). We will show that given any positive integer $r$, Condition C1 is also satisfied for a generic matrix of rank $r$ (see Section 3). This claim is proved using the following property of the varieties $V_n$ and $W_n$. 

\begin{theorem}
1) Let $(y,z,A)$ be in $V_n$ and $D$ any invertible diagonal $n\times n$ matrix. Then $(Dy,Dz,DAD^{-3})$ is also in $V_n$. 

2) Let $(y,z,A)$ be in $W_n$ and $D$ any invertible diagonal $n\times n$ matrix. Then  $(D^3y,Dz,DAD^{-3})$ is also in $W_n$. 
\label{TheoremGroupAction}\end{theorem}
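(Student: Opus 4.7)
My approach is direct substitution: I would plug the proposed transformed triple into each defining equation of $V_n$ (resp.\ $W_n$) and verify that the equation reduces to the original one, multiplied or conjugated by a power of $D$ that preserves the required equality.

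The tools needed are three trivial identities for an invertible diagonal matrix $D$: coordinate-wise powers satisfy $(Du)^k = D^k u^k$, so $\Delta[(Du)^k] = D^k \Delta[u^k]$; diagonal matrices commute with one another; and $\det(D^k)\det(D^{-k}) = 1$. Everything else is routine bookkeeping.

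For part 1), set $A' = DAD^{-3}$, $y' = Dy$, $z' = Dz$. Since $sz' + ty' = D(sz+ty)$, the identities above give
\[
Id + \Delta\bigl[(sz'+ty')^2\bigr] A' \;=\; D^{3}\bigl(Id + \Delta[(sz+ty)^2]\, A\bigr) D^{-3},
\]
so the determinant condition is preserved for all $s,t$. For the cubic relation, ${z'}^{3} = D^{3} z^{3}$ and $z'*{y'}^{2} = D^{3}(z*y^{2})$ combine with $DAD^{-3}\cdot D^{3} = DA$ to give $D\bigl[z + A(z^{3} + z*y^{2})\bigr] = 0$, which holds by hypothesis.

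Part 2) is structurally identical, but the scaling of $y$ must be adjusted so that $A'y' = D(Ay)$; since $A' = DAD^{-3}$, this forces $y' = D^{3}y$. The determinant and cubic identities then go through verbatim with $Ay$ in place of $y$. I do not expect any genuine obstacle; the only subtlety is tracking the exponents of $D$ correctly. The choice $A \mapsto DAD^{-3}$ is dictated by the requirement that the matrix inside the determinant become a $D^{3}$-conjugate of the original, and the asymmetry $y' = Dy$ in part 1) versus $y' = D^{3}y$ in part 2) reflects the fact that $y$ appears directly in the defining equations of $V_n$ but only through $Ay$ in those of $W_n$.
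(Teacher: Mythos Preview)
Your proposal is correct and follows essentially the same route as the paper's proof: direct substitution using the diagonal-matrix identities $(Du)^k=D^ku^k$ and $\Delta[(Du)^2]=D^2\Delta[u^2]$, then recognizing the transformed determinant expression as a $D^3$-conjugate of the original and the transformed cubic relation as $D$ times the original. The paper writes out the same intermediate steps (with the cosmetic difference of computing $\det(Id+D^3\Delta[(sz+ty)^2]AD^{-3})$ rather than first displaying the full conjugation identity), and your remark explaining why part~2) requires $y'=D^3y$ is exactly the point the paper leaves implicit.
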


Hence, there is an action of the group of invertible diagonal matrices on the sets of matrices (not) satisfying Conditions C1 and C2, given by $A\mapsto DAD^{-3}$. (In fact, assume for example that $A$ does not satisfy Condition 1. Then,  there are $y,z$ with $z\not= 0$ such that $(y,z,A)\in V_n$. Then for  any invertible matrix $D$ we also have $(Dy,Dz,DAD^{-3})\in V_n$ with $Dz\not= 0$. Therefore, $DAD^{-3}$ does not satisfy Condition 1 either.) 

By the proof of Theorem \ref{TheoremGroupAction}, if $A$ is a Druzkowski matrix and $D$ is an invertible diagonal matrix, then $DAD^{-3}$ is also a Druzkowski matrix. (Remark: This is a special case of the cubic similarity mentioned above, with $B=DAD^{-3}$ and $L=D^3$.) Combining this theorem and Lemma \ref{Lemma}, we obtain the following useful equivalent formulation of the Jacobian conjecture.
\begin{theorem}
To prove the Jacobian conjecture $JC(\infty )$, it is necessary and sufficient to show that for any $n$ and any $k\in \{3,\ldots ,n\}$, there are no Druzkowski $n\times n$ matrix $A$ and $y\in \mathbb{C}^n$  such that
\begin{eqnarray*}
Z_k+A.(Z_k^3+Z_k*(A.y)^2)=0,
\end{eqnarray*}
where $Z_k=(1,\ldots ,1,0,\ldots ,0)^t\in \mathbb{C}^n$ is the vector whose first $k$ coordinates are $1$ and whose last $n-k$ coordinates are $0$.
\label{TheoremFurtherReduction}\end{theorem}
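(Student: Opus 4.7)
The plan is to combine Lemma \ref{Lemma} (the forthcoming equivalence of Condition C2 with the Jacobian conjecture for Druzkowski matrices) with the diagonal group action of Theorem \ref{TheoremGroupAction}, so as to normalise any hypothetical failure of $JC(\infty)$ to the claimed form, and then to rule out the small cases $k=1,2$ by direct inspection of the Druzkowski determinant. The $(\Rightarrow)$ direction is essentially immediate, since any solution of $Z_k + A.(Z_k^3+Z_k*(A.y)^2)=0$ with $k\ge 3$ gives, via Theorem \ref{TheoremInjectiveCriterion}, a Druzkowski Keller map that is not an automorphism.

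For the substantive $(\Leftarrow)$ direction I would argue by contradiction. Suppose $JC(\infty)$ fails: by Lemma \ref{Lemma} and Theorem \ref{TheoremInjectiveCriterion} there exist some $n$, a Druzkowski matrix $A\in\mathcal M_n$, and $(y,z)\in\C^n\times\C^n$ with $z\ne 0$ and $(y,z,A)\in W_n$. Let $S=\{i:z_i\ne 0\}$, $k=|S|$, and form the invertible diagonal matrix $D$ with $D_{ii}=z_i^{-1}$ for $i\in S$ and $D_{ii}=1$ otherwise. Theorem \ref{TheoremGroupAction}(2) then places $(D^3 y,Dz,DAD^{-3})$ in $W_n$, and the paragraph immediately after that theorem records that $DAD^{-3}$ is again Druzkowski; by construction $Dz$ is the $0/1$-indicator of $S$. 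A simultaneous coordinate permutation moving $S$ to $\{1,\ldots,k\}$ corresponds to conjugating the matrix by a permutation $P$, and the identity $F_{P^{-1}CP}=P^{-1}\circ F_C\circ P$ (valid for any square $C$) shows this operation too preserves the Druzkowski property. One thereby obtains a Druzkowski $A'$, some $y'\in\C^n$, and some $k\in\{1,\ldots,n\}$ with
\[
Z_k + A'.(Z_k^3 + Z_k*(A'.y')^2)=0.
\]
To obtain a contradiction to the hypothesis of $(\Leftarrow)$, it suffices to rule out $k=1$ and $k=2$.

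For $k=1$, the equation forces $A'e_1=\alpha e_1$ with $\alpha=-(1+(A'y')_1^2)^{-1}\ne 0$; specialising the Druzkowski identity at $x=\xi e_1$ collapses rows $2,\ldots,n$ of $Id+3\Delta[(A'x)^2]A'$ to the standard basis vectors and leaves the $(1,1)$-entry $1+3\alpha^3\xi^2$, so the full determinant equals $1+3\alpha^3\xi^2$; identity in $\xi$ then forces $\alpha=0$, contradicting $\alpha\ne 0$. For $k=2$, writing $v_j=A'e_j$ and $c_j=1+(A'y')_j^2$, the equation becomes $c_1v_1+c_2v_2=-(e_1+e_2)$; specialising the Druzkowski identity on the plane $\C e_1\oplus\C e_2$ extracts a polynomial identity in two variables on the $2\times 2$ upper-left minor of $Id+3\Delta[(A'x)^2]A'$ which, combined with this linear relation on the first two columns, is inconsistent in both subcases $c_1=c_2$ and $c_1\ne c_2$. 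This $k=2$ bookkeeping is the main anticipated obstacle --- two parameters must be tracked simultaneously and the two subcases treated --- but it is computational rather than conceptual and parallels the $k=1$ calculation once the correct minor is isolated.
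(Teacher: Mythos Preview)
Your overall reduction is correct and matches the paper's: normalise a hypothetical nonzero $z$ via the diagonal action of Theorem \ref{TheoremGroupAction}(2) and a coordinate permutation to $z=Z_k$, then exclude $k\in\{1,2\}$. Note, though, that you have mis-identified Lemma \ref{Lemma}: in the paper it is \emph{precisely} the statement that no $(y,Z_k,A)$ lies in $V_n$ for $k=1,2$ --- it \emph{is} the exclusion of the small cases, not the equivalence of Condition C2 with the Jacobian conjecture (that equivalence is immediate from Theorem \ref{TheoremInjectiveCriterion} and Lemma \ref{Lemma1}). Having misread Lemma \ref{Lemma}, you then attempted to re-derive the $k=1,2$ exclusions from scratch.

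Your $k=1$ argument is correct and in fact cleaner than the paper's (which instead rewrites the equation as $(Id+A.\Delta[(sz+y)^2]).z=0$ for a suitable $s$ and invokes invertibility). The $k=2$ sketch, however, has a genuine gap. For $k=1$ the collapse to the $(1,1)$-entry worked because $A'e_1=\alpha e_1$ forces $A'x$ to be supported on the first coordinate for \emph{every} $x\in\C e_1$. For $k=2$ the relation $c_1v_1+c_2v_2=-(e_1+e_2)$ constrains only one linear combination of the columns; for generic $\xi_1,\xi_2$ the vector $A'x=\xi_1v_1+\xi_2v_2$ need not be supported on coordinates $1,2$, so $\Delta[(A'x)^2]A'$ has no reason to vanish outside the first two rows, and the full $n\times n$ determinant does \emph{not} reduce to the upper-left $2\times2$ minor. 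Your promised ``polynomial identity in two variables on the $2\times2$ upper-left minor'' is therefore unavailable, and the subcase split $c_1=c_2$ versus $c_1\ne c_2$ has no platform.

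The paper's route (Lemma \ref{Lemma}) first passes to $V_n$: since $Z_k=-A'.(\cdots)$ and $A'y'$ both lie in the image of $A'$, the Druzkowski identity yields $\det(Id+\Delta[(sZ_k+tA'y')^2].A')=1$ for all $s,t$, so $(A'y',Z_k,A')\in V_n$. Setting $t=0$ then shows $\Delta[Z_2^2].A'$ is nilpotent, which forces the upper-left $2\times2$ block $A_{1,1}$ to satisfy $A_{1,1}^2=0$; multiplying the first two equations of the system by $A_{1,1}$ yields explicit relations on the entries of $A_{1,1}$ and on $(A'y')_1^2-(A'y')_2^2$; finally the $st$-coefficient of the trace of $\Delta[(sZ_2+tA'y')^2].A'$ supplies the contradiction. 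It is the nilpotency of $A_{1,1}$ --- obtained from the $V_n$ determinant condition, not from any collapse of the Druzkowski determinant along $\C e_1\oplus\C e_2$ --- that isolates the relevant $2\times2$ data.
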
  
In particular, $JC(\infty )$ holds if the analog of the above criterion for Condition 1 is true.
\begin{theorem}
To prove the Jacobian conjecture $JC(\infty )$, it is sufficient to show that for any $n$ and any $k\in \{3,\ldots ,n\}$, there are no Druzkowski $n\times n$ matrix $A$ and $y\in \mathbb{C}^n$  such that $(y,Z_k,A)\in V_n$ where $Z_k=(1,\ldots ,1,0,\ldots ,0)^t\in \mathbb{C}^n$ is the vector whose first $k$ coordinates are $1$ and whose last $n-k$ coordinates are $0$.
\label{TheoremFurtherReduction1}\end{theorem}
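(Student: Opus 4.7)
The plan is to deduce Theorem \ref{TheoremFurtherReduction1} from Theorem \ref{TheoremFurtherReduction}, which already provides an equivalent reformulation of $JC(\infty)$. It therefore suffices to prove the contrapositive: any Druzkowski matrix $A$ and $y\in\mathbb{C}^n$ satisfying
\[
Z_k + A.(Z_k^3 + Z_k*(A.y)^2)=0
\]
gives rise to a triple of the form $(y',Z_k,A)\in V_n$ with the same Druzkowski matrix $A$. The natural candidate is $y':=A.y$.

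With this choice, the second defining equation of $V_n$, namely $Z_k+A.(Z_k^3+Z_k*(y')^2)=0$, is literally the original assumption. The only nontrivial step is the first defining equation
\[
\det(Id+\Delta[(sZ_k+ty')^2].A)=1\quad\text{for all }s,t\in\mathbb{C}.
\]
For this I will invoke the auxiliary fact that for any Druzkowski matrix $A$ one has
\[
\det(Id+\Delta[v^2].A)=1\quad\text{for every }v\in\mathrm{Im}(A),
\]
obtained by writing $v=A.x$, rescaling $x\mapsto x/\sqrt{3}$, and applying the Keller identity $\det(Id+3\Delta[(A.x)^2].A)=1$ to the rescaled argument. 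Since $\mathrm{Im}(A)$ is a $\mathbb{C}$-linear subspace, it suffices to check that both $Z_k$ and $y'$ lie in it; the latter is immediate from $y'=A.y$, and the former is obtained by rewriting the hypothesis as $Z_k=-A.(Z_k^3+Z_k*(A.y)^2)$.

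Combining these observations, $(A.y,Z_k,A)$ belongs to $V_n$ and has $A$ Druzkowski, contradicting the hypothesis of Theorem \ref{TheoremFurtherReduction1}; hence no counterexample to the condition in Theorem \ref{TheoremFurtherReduction} exists, and $JC(\infty)$ follows. I do not foresee a genuine obstacle: the entire argument rests on the single observation that the defining equation of the putative witness forces $Z_k\in\mathrm{Im}(A)$, which is exactly what is needed to propagate the Druzkowski determinant identity from its standard form $\det(Id+3\Delta[(A.x)^2].A)=1$ to the two-parameter family of evaluations $v=sZ_k+tA.y$ appearing in the definition of $V_n$.
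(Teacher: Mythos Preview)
Your argument is correct and is precisely the reasoning the paper has in mind: the paper does not spell out a proof of Theorem~\ref{TheoremFurtherReduction1} beyond the sentence ``In particular, $JC(\infty)$ holds if the analog of the above criterion for Condition~1 is true,'' together with the earlier remark that for a Druzkowski matrix the determinant condition in $W_n$ is automatic. Your proof unpacks exactly this implication---if $(y,Z_k,A)$ witnesses the failure of the criterion in Theorem~\ref{TheoremFurtherReduction}, then $(A.y,Z_k,A)\in V_n$---and your key observation that the equation $Z_k=-A.(Z_k^3+Z_k*(A.y)^2)$ forces $Z_k\in\mathrm{Im}(A)$ is precisely what makes the determinant condition in $W_n$ (and hence in $V_n$ after the substitution $y'=A.y$) automatic for Druzkowski $A$; the paper uses this same fact without stating it explicitly.
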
  
In Section 4 we will illustrate the effectiveness (i.e. time saving) of using these two theorems and Conditions 1 and 2 on specific examples. In Section 5 we will explain theoretically this effectiveness and compare our approaches to several existing approaches. To conclude this subsection, we observe that a generic Druzkowski map satisfies the criteria in these two theorems. 
\begin{theorem}
a) A generic Druzkowski map satisfies the criteria in Theorems \ref{TheoremFurtherReduction} and \ref{TheoremFurtherReduction1}. More precisely, let $A$ be a generic Druzkowski matrix of dimension $n$. Then, there are no $k\in \{3,\ldots ,n\}$ and $y\in \mathbb{C}^n$ such that 
\begin{eqnarray*}
\det (Id+\Delta [(sz+ty)^2].A)&=&1,~\forall s,t\in \mathbb{C},\\
Z_k+A.(Z_k^3+Z_k*y^2)&=&0.
\end{eqnarray*}

b) More generally, if $V$ is a subvariety of $\mathcal{M}_n$ invariant under the action of the group of invertible  diagonal matrices, i.e. $DVD^{-3}=V$ for all invertible diagonal matrix $D$, then a generic element of $V$ satisfies the criteria in Theorems \ref{TheoremFurtherReduction} and \ref{TheoremFurtherReduction1}.
\label{TheoremGeneric}\end{theorem}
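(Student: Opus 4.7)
Part (a) is the specialization of (b) to $V = \mathcal{D}_n$, which is $T$-invariant (where $T = (\mathbb{C}^*)^n$ acts via $A \mapsto DAD^{-3}$) by the remark following Theorem \ref{TheoremGroupAction}, so I focus on part (b). For each $k \in \{3,\ldots,n\}$, let $B_k \subseteq \mathcal{M}_n$ denote the set of matrices $A$ for which some $y \in \mathbb{C}^n$ satisfies both the determinant identity and the equation $Z_k + A(Z_k^3 + Z_k * y^2) = 0$. The plan is to identify a proper closed subvariety $N_k \supseteq B_k$ of $\mathcal{M}_n$ and then argue that no irreducible component of $V$ lies in $\bigcup_{k=3}^n N_k$.

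First I would extract necessary conditions for membership in $B_k$. Since $Z_k^3 = Z_k$ and the map $y_i \mapsto 1 + y_i^2$ is surjective onto $\mathbb{C}$, writing $v = (1+y_1^2,\ldots,1+y_k^2,0,\ldots,0)^t$ the vector equation becomes $Av = -Z_k$, whose solvability in $y$ is equivalent to the linear condition $-Z_k \in \mathrm{span}(A_{\cdot 1},\ldots,A_{\cdot k})$, defining a subvariety of codimension $n-k$ when $k<n$. From the determinant identity, setting $t = 0$ yields $\det(Id + s^2 \Delta[Z_k]A) = 1$ for all $s$, forcing $\Delta[Z_k]A$ to be nilpotent, equivalently the top-left $k \times k$ block of $A$ is nilpotent. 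For $k = n$ this forces $A$ itself nilpotent, and combined with $Z_n \in \mathrm{Im}(A)$ this cuts every rank stratum by positive codimension. Thus each $B_k$ is contained in a proper closed subvariety $N_k \subsetneq \mathcal{M}_n$, and $\bigcup_{k=3}^n N_k$ is a proper closed subvariety of $\mathcal{M}_n$.

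To conclude, I need $V' \not\subseteq \bigcup_k N_k$ for every irreducible component $V'$ of $V$; since each $V'$ is itself $T$-invariant ($T$ being connected), it suffices to exhibit one matrix per component satisfying Condition C1, which is strictly stronger than the criteria in question (as $Z_k \neq 0$ rules out any candidate $z = Z_k$). For this, I would invoke the explicit sufficient conditions for C1 from Theorem \ref{Theorem1}: rank-$1$ Druzkowski matrices $A = uw^t$ with $w^t u = w^t u^3 = 0$ satisfy C1 by part (2), and iterated application of the block-form reduction of Theorem \ref{TheoremReduction} produces C1-matrices of arbitrary rank inside $V$. The $T$-invariance of both $V$ and the C1-locus then promotes the existence of such a witness into Zariski-density of C1-matrices in each component.

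The principal obstacle is the last step in the case where $V$ (in particular $V = \mathcal{D}_n$) is reducible, as happens for $n \geq 530$ by Rusek's result cited in the introduction. One must then certify that every exotic irreducible component of $V$ contains at least one C1-matrix. Here the $T$-invariance should be exploited systematically: if a component $V'$ were contained in some single $N_k$, the $T$-action on $V'$ (which scales the columns of $A$ via $A \mapsto DAD^{-3}$) would force the defining condition of $N_k$ to be preserved under all such scalings, which is only compatible with $V'$ being contained in a very degenerate locus such as matrices whose first $k$ columns vanish. Ruling this out component-by-component, or alternatively via a more refined invariant-theoretic argument based on the orbit structure of $T$ acting on $V$, is the technical crux of the proof.
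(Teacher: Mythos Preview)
Your extraction of the two necessary conditions (nilpotency of the top-left $k\times k$ block $A_{1,1}$, and $(1,\ldots,1)^t\in\mathrm{Im}(A_{1,1})$) matches the paper exactly, and the reduction of (a) to (b) via $T$-invariance is correct. The divergence---and the gap---is in how you try to show that no component of $V$ is contained in the bad locus.

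Your primary strategy, producing explicit C1-matrices in each component via Theorem~\ref{Theorem1} and Theorem~\ref{TheoremReduction}, does not work as stated: those constructions (rank~1, triangular, block-extended) give matrices in \emph{some} components but you have no control over which, and for an arbitrary $T$-invariant $V$ (or for $\mathcal{D}_n$ with $n\geq 530$) there is no reason they reach every component. You recognize this in your final paragraph, but the invariant-theoretic argument you sketch there remains a hope rather than a proof.

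The paper sidesteps this entirely with an orbit-wise argument that is both simpler and complete. Fix \emph{any} $A\in V$ and any $k$. If the criterion fails for $A$ at this $k$, then $A_{1,1}$ is nilpotent, hence $\mathrm{Im}(A_{1,1})\subsetneq\mathbb{C}^k$. For $B=DAD^{-3}$ the top-left block is $D_1A_{1,1}D_1^{-3}$, and the criterion fails at $k$ for $B$ only if $(1,\ldots,1)^t\in\mathrm{Im}(D_1A_{1,1}D_1^{-3})$, i.e.\ $D_1^{-1}(1,\ldots,1)^t\in\mathrm{Im}(A_{1,1})$. But as $D_1$ ranges over invertible diagonal matrices, $D_1^{-1}(1,\ldots,1)^t$ ranges over all of $(\mathbb{C}^*)^k$, which is Zariski-dense in $\mathbb{C}^k$ and so cannot lie in the proper subspace $\mathrm{Im}(A_{1,1})$. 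Hence for generic $D$ the criterion holds for $DAD^{-3}$ at this $k$; intersecting over $k=3,\ldots,n$ is still generic. Since the $T$-orbit of $A$ lies in a single irreducible component of $V$ (the torus being irreducible), every component contains a dense set of good matrices, and the Closure Theorem finishes. This is exactly the ``$T$-invariance exploited systematically'' that you were reaching for, but executed as a concrete image-of-nilpotent argument rather than an abstract one.
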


\subsection{A geometric approach}

Since $V_n$ (and $W_n$) are algebraic subvarieties of an affine space, it follows that $V_n$ (respectively $W_n$) have a finite number of irreducible components. It turns out that each component is either contained in or disjoint from $\{z=0\}$ . 
\begin{theorem}
Fix $n\geq 2$. Let $V$ be an irreducible component of $V_n$ or $W_n$. Then $V\subset \{z=0\}$ iff $V\cap \{z=0\}\not=\emptyset$. The same result holds more generally for connected components of $V_n$ or $W_n$.
\label{TheoremConnectedness}\end{theorem}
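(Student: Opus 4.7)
The plan is to apply the holomorphic implicit function theorem to the nonlinear equation defining $V_n$ (respectively $W_n$) along the slice $\{z=0\}$, and then invoke the classical fact that irreducible complex algebraic varieties are connected in the Euclidean topology. To treat both cases uniformly, first I would rewrite the second defining equation of $V_n$ (resp.\ $W_n$) as $F(y,z,A)=0$, where
\begin{equation*}
F(y,z,A)=z+A.(z^3)+A.\Delta [u^2].z=(Id+A.\Delta [u^2]).z+A.(z^3),
\end{equation*}
with $u=y$ for $V_n$ and $u=A.y$ for $W_n$. In particular $\partial F/\partial z$ at $z=0$ equals the matrix $M(y,A):=Id+A.\Delta [u^2]$.

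The key observation is then that at any point $(y_0,0,A_0)\in V_n$ (resp.\ $W_n$), the determinantal condition in the definition of $V_n$ (resp.\ $W_n$), specialised to $s=0$ and $t=1$, reads $\det (Id+\Delta [u_0^2].A_0)=1$. Combined with the cyclic identity $\det (Id+XY)=\det (Id+YX)$, this gives $\det M(y_0,A_0)=1$, so $\partial F/\partial z$ is invertible at $(y_0,0,A_0)$. The holomorphic implicit function theorem then produces a Euclidean neighbourhood $U$ of $(y_0,0,A_0)$ on which the equation $F=0$ has a unique analytic solution $z=\phi (y,A)$ with $\phi (y_0,A_0)=0$; since $z\equiv 0$ is an obvious solution for every $(y,A)$, uniqueness forces $\phi\equiv 0$ on $U$. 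Hence $\{F=0\}\cap U\subset \{z=0\}$, and a fortiori $V_n\cap U\subset \{z=0\}$ (and similarly for $W_n$).

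Consequently $V_n\cap \{z=0\}$ is open in $V_n$ in the Euclidean topology; it is trivially closed, being the preimage of $0$ under a continuous projection. Since any irreducible complex algebraic variety is Euclidean-connected (its smooth locus is an irreducible complex manifold, hence connected), any irreducible component of $V_n$ or $W_n$ meeting $\{z=0\}$ must be contained in it; the same clopen argument gives the conclusion for connected components.

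The subtlety I anticipate is that a purely tangent-space argument, linearising only the second defining equation at $(y_0,0,A_0)$, would establish $T_pV_n\subset \{\dot z=0\}$ but -- as the example of the parabola $\{z=w^2\}$ in $\mathbb{C}^2$ shows -- such first-order information alone cannot yield the desired inclusion of varieties $V\subset \{z=0\}$. What makes the argument go through is that we \emph{retain} the cubic term $A.(z^3)$ in the nonlinear equation and apply the implicit function theorem to it, yielding a genuine local equality rather than a mere tangential statement.
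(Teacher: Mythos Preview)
Your argument is correct and rests on the same mathematical core as the paper's: at any point $(y_0,0,A_0)$ of $V_n$ (or $W_n$) the determinantal condition, specialised to $s=0$, $t=1$, together with the Sylvester identity forces the linearisation in $z$ to be invertible, whence $z=0$ is locally the unique solution and $\{z=0\}$ is clopen in $V_n$.

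The only difference is packaging. The paper first passes through the transformation of Lemma~\ref{Lemma1} (setting $x=-(z^3+z*y^2)$, so $Ax=z$), rewrites the equation as $(Id+\Delta[y^2]A)x=-(Ax)^3$, and then argues by an explicit sequence estimate $\|x^{(j)}\|\le C\|x^{(j)}\|^3$ to reach the contradiction. You bypass that detour and invoke the holomorphic implicit function theorem directly on $F(y,z,A)=(Id+A\Delta[u^2])z+A(z^3)$; this is cleaner and treats $V_n$ and $W_n$ uniformly from the outset. Conversely, the paper's explicit inequality makes the mechanism (a cubic remainder dominated by a linear term with unit determinant) visible without appealing to a black box, and its passage through $x$ ties the argument back to the injectivity formulation of the Jacobian conjecture. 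Either route is fine; yours is arguably the more natural one here.
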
 
Rusek's result \cite{rusek} showed that the "geometric approach" of showing that the set of Druzkowski matrices of a given dimension $n$ is irreducible does not work in dimension $\geq 530$. However, from Theorem \ref{TheoremConnectedness}, we can propose another "geometric approach" consisting in showing that $W_n$ is connected. (Note that from Theorem \ref{TheoremConnectedness} and Section 4.3, it follows that $V_n$ is not connected for $n\geq 4$.) In fact, we can state a geometric equivalent formulation of the Jacobian Conjecture.
\begin{theorem}
$JC(\infty )$ holds iff for all $n\in \mathbb{N}$ the affine variety $W_n':=\{(y,z,A)\in W_n$ and $A$ is a Druzkowski matrix $\}$ is connected.

In particular, if $W_n$ is connected for every $n\in \mathbb{N}$ then $JC(\infty )$ holds.  
\label{TheoremEquivalentFormulation}\end{theorem}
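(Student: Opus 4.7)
My plan is to deduce the equivalence from Theorem \ref{TheoremConnectedness} via two observations: the point $(0,0,0)$ lies in $W_n' \subset W_n$, and the set of Druzkowski matrices is closed under the scaling $A \mapsto \lambda A$. Since $W_n'$ is a Zariski-closed subvariety of $W_n$ (the Druzkowski condition is polynomial in the entries of $A$), every connected component of $W_n'$ sits inside a unique connected component of $W_n$, so Theorem \ref{TheoremConnectedness} implies that any connected component of $W_n'$ is either contained in $\{z=0\}$ or disjoint from it.

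For the backward direction of the main equivalence, I assume $W_n'$ is connected for every $n$. Since $(0,0,0) \in W_n'$ meets $\{z=0\}$, the dichotomy above forces $W_n' \subset \{z=0\}$, which says exactly that every Druzkowski matrix satisfies Condition C2. Combining this with Theorem \ref{TheoremInjectiveCriterion} and the Druzkowski reduction (Theorem \ref{TheoremDruzkowski}) yields $JC(\infty)$. The \emph{In particular} clause follows by the same argument applied to $W_n$ itself: if $W_n$ is connected, then $W_n \subset \{z=0\}$, so certainly $W_n' \subset \{z=0\}$, and $JC(\infty)$ holds.

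For the forward direction, assume $JC(\infty)$. Then every Druzkowski matrix satisfies Condition C2, so $W_n'$ coincides with $\{(y, 0, A) : A \text{ is Druzkowski}\}$ (the defining equation of $W_n$ is trivial when $z=0$, and the accompanying determinant condition is automatic for Druzkowski $A$, as noted after the definition of $W_n$). I would connect an arbitrary triple $(y, 0, A)$ to $(0, 0, 0)$ by the straight path $\tau \mapsto (\tau y, 0, \tau A)$, $\tau \in \mathbb{C}$. The only substantive check is that $\tau A$ remains Druzkowski; this is the main obstacle, though modest. It follows from the polynomial identity
\begin{equation*}
\det\bigl(Id + c\,\Delta[(Ax)^2] A\bigr) = 1 \quad \text{for all } c \in \mathbb{C} \text{ and } x \in \mathbb{C}^n,
\end{equation*}
which one obtains from the Druzkowski identity at $c = 3$ by substituting $x \mapsto \mu x$ (sweeping out $c = 3\mu^2$) and then invoking polynomiality in $c$; applied with $c = 3\tau^3$ it shows that $\tau A$ is Druzkowski. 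Hence the entire path lies in $W_n'$, proving $W_n'$ is path-connected and completing the equivalence.
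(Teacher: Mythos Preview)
Your proof is correct and follows essentially the same approach as the paper's: both directions hinge on Theorem \ref{TheoremConnectedness} together with the fact that the Druzkowski locus is closed under the scaling $A\mapsto \tau A$. The only minor difference is that for the backward direction the paper invokes the \emph{proof} of Theorem \ref{TheoremConnectedness} directly on $W_n'$, whereas you route through the ambient $W_n$ by observing that a connected $W_n'$ sits inside a single connected component of $W_n$; both are valid, and your version has the advantage of using Theorem \ref{TheoremConnectedness} exactly as stated.
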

 \subsection{Organization of the paper} 
The remaining of this paper is organized as follows. In Section 2 we explain how Conditions 1 and 2 are derived, and why they imply the Jacobian Conjecture. We state there one generalization of the Jacobian conjecture. In Section 3, we prove the above theorems. In Section 4, we present the computational experimental computations, including those on some Druzkowski matrices previously considered by other authors. In Section 5, we give details of the approaches together with detailed strategies to employ them. In the same section we also explain theoretically the effectiveness (time saving) of our approaches in practice, compare them with other existing approaches, and state some generalisations of the Jacobian conjecture together with both theoretical and experimental evidences to support them. As an application, we suggest a strategy to use cubic similarity to check that all Druzkowski maps in dimension $\leq 9$ satisfy the Jacobian conjecture. In the Appendix we present the Mathematica codes used. 

{\bf Acknowledgements.} We are benefited from discussions with Neeraj Kashyap and Hang Tien Nguyen on computational aspects. The latter also helped with running some examples. Laughlin Andrew Campbell brought the paper \cite{cima-gasull-llibre-manosas} to our attention. The comments and suggestions of them and Finnur Larusson, Eric Bedford and Tien-Cuong Dinh helped to improve the paper. Most of the experimental computations that require long time and big memory have been done on the BigMem cluster of the Tizzard super computer of eResearch South Australia, and Andrew Hill there generously helped us. Part of the research was carried out while the author was at the Korea Institute for Advanced Study (KIAS), and we were helped by the KIAS Center for Advanced Computation (with computer resources) and Hoyoung Kim (with technical issues). John Dixon and Craig Bauling from the Wolfram company replied to our enquiries, in particular instructed us to a Mathematica command which  describes the Groebner Basis in terms of the original defining polynomials. We would like to thank all these institutions and people for their valuable help.

\section{Derivation of the Conditions}

In this section we explain the derivation of the Conditions 1 and 2 and  show why they imply the Jacobian Conjecture.  

There is a well-known result that a polynomial self-map of $\mathbb{C}^n$ is an automorphism if it is injective (Chapter 3 in \cite{essen}).  In theory, we can check, for each given dimension $n$, whether all Druzkowski maps are injective by using a computer program (for example Mathematica) to find the Groebner basis for the ideal $\mathcal{I}$ defined by the equations $x+(A.x)^3=y+(A.y)^3$ and
\begin{equation}
\det (Id+\Delta [(A.z)^2].A)=1
\label{Equation0}\end{equation}
 for all $z\in \mathbb{C}^n$,  to see that $x-y$ belongs to this Groebner basis. However, in practice one faces the difficulty that the number of the polynomials in the ideal defined by the system $\det (Id+\Delta [(A.z)^2].A)=1$ for all $z\in \mathbb{C}^n$ grows very fast with respect to the dimension $n$: it is roughly the same as the number of monomials of degrees at most $n$ in $n$ variables. We note that an explicit procedure for writing down these equations was given in \cite{gorni-gasinska-zampieri}.
 
 {\bf Remark.} However, we note from the experiments (Section 4 in this paper), that in practice it is quite effective to use Theorems \ref{TheoremGroupAction} and \ref{TheoremFurtherReduction} to check that any given Druzkowski map satisfies the Jacobian conjecture with the help of a computer program. 

This paper grew out of the author's curiosity to see whether we can reduce the number of equations defining the ideal $\mathcal{I}$. (It is a classical result, \cite{eisenbud-evans} and references therein, that any algebraic subvariety of $\mathbb{C}^N$ is defined by $N$ equations, however it is quite challenging to find the equations for explicit examples.) It also originated from our trying to ponder on the following question: 

{\bf Question.} If a formal proof of the Jacobian conjecture is to be found for all Druzkowski maps of degree $3$ in all dimensions $n$, how can we make use of the assumption that $JF$ is invertible? 

To be more explicit about this Question, let us first make some simple algebraic reductions. Let $u,v\in \mathbb{C}^n$ be such that $F(u)=F(v)$, that is $u+(A.u)^3=v+(A.v)^3$.  Then by subtracting and using that $A$ is a linear map, we find that 
$$(u-v)+(A.u-A.v)*( (A.u)^2+(A.u)*(A.v)+(A.v)^2 )=0.$$ 
If we define $x=u-v$ then we can write the above equation as
\begin{eqnarray*}
x+(A.x)*( (A.x)^2+3(A.x)*(A.v)+3(A.v)^2 )=0.
\end{eqnarray*}  
Now, by substituting
$$y=\sqrt{3}v+\frac{\sqrt{3}}{2}x,$$
and then replacing $x$ by $x/2$, we see that the above equation is reduced to 
\begin{equation}
x+(A.x)*((A.x)^2+(A.y)^2)=0.
\label{Equation1}\end{equation} 
Then, the fact that $F$ is injective is the same as that the equation $x+(A.x)*((A.x)^2+(A.y)^2)=0$ has only the solution $x=0$. This and Lemma \ref{Lemma1} below lead to Condition 2. 

Now, we see that $y$ appears in the ideal $\mathcal{I}$ only through $A.y$. Hence it is natural to ask whether the Jacobian conjecture is in fact stronger, that is in Equation (\ref{Equation1}), we can replace $Ay$ by $y$ (which of course must satisfy a condition compatible with Equation (\ref{Equation0})) and still obtain the same conclusion? Hence, we state a weaker version of our main conjecture:

{\bf Conjecture (Weaker version).} Let $A$ be a Druzkowski $n\times n$ matrix. Assume that $y\in \mathbb{C}^n$ satisfy
\begin{eqnarray*}
\det (Id + \Delta [(A.x +ty)^2].A)=1
\end{eqnarray*} 
for all $t\in \mathbb{C}$ and all $x\in \mathbb{C}^n$. Then, if $x+(Ax)*((Ax)^2+y^2)=0$, we must have $x=0$. 

{\bf Remark.} If $A$ is an $n\times n$ matrix for which $\det (Id + \Delta [(A.x +ty)^2].A)=1$ for all $t\in \mathbb{C}$ and $x\in \mathbb{C}^n$ then $A$ must be a Druzkowski map as we can readily see by putting $t=0$ in the equality. Hence the above Conjecture, while a bit stronger than the Jacobian conjecture, is only for Druzkowski matrices. 

We may push this investigation further, by asking that in showing that $x+(A.x)*((A.x)^2+y^2)=0$ has only the solution $x=0$, do we need the assumption (\ref{Equation0}) somehow on the plane generated by $A.x$ and $y$ only? This leads us to state the formulation of our main conjecture. 
 
 {\bf Main Conjecture.} Let $A$ be a Druzkowski $n\times n$ matrix. Assume that $x,y\in \mathbb{C}^n$ satisfy
\begin{eqnarray*}
\det (Id + \Delta [(sA.x+ty)^2].A)=1
\end{eqnarray*} 
for all $s,t\in \mathbb{C}$. Then, if moreover $x+(Ax)*((Ax)^2+y^2)=0$, we must have $x=0$. 

These two conjectures can be seen to be more general than the original Jacobian conjecture. If we ask for not only Druzkowski matrices but general $n\times n$ matrices and use the following Lemma, we arrive at Condition 1. 
\begin{lemma}
The following two statements are equivalent:

1) There is a non-zero solution $x$ to $x+(A.x)^3+(A.x)*y^2=0$,

and

2) There is a non-zero solution $z$ to $z+A.(z^3+z*y^2)=0$.
\label{Lemma1}\end{lemma}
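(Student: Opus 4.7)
The plan is to exhibit explicit substitutions that send non-zero solutions of one equation to non-zero solutions of the other; once the right substitutions are identified, each direction reduces to a one-line algebraic check.

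For the implication 1) $\Rightarrow$ 2), I would start with a non-zero $x$ satisfying $x + (A.x)^3 + (A.x)*y^2 = 0$ and set $z := A.x$. Applying $A$ to the original equation gives $A.x + A.((A.x)^3 + (A.x)*y^2) = 0$, which under the substitution reads $z + A.(z^3 + z*y^2) = 0$. Moreover $z \ne 0$, for if $A.x = 0$ then the original equation immediately forces $x = 0$, contradicting the assumption.

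For the converse 2) $\Rightarrow$ 1), given a non-zero $z$ with $z + A.(z^3 + z*y^2) = 0$, I would set $x := z^3 + z*y^2$. By the defining equation this gives $A.x = -z$, whence $(A.x)^3 + (A.x)*y^2 = -z^3 - z*y^2 = -x$, which rearranges to $x + (A.x)^3 + (A.x)*y^2 = 0$. Again $x \ne 0$ is automatic, since $x = 0$ would yield $A.x = 0 = -z$, contradicting $z \ne 0$.

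I do not anticipate any serious obstacle. The substance of the lemma is really the observation that the two equations are dual under the pair of substitutions $z \leftrightarrow A.x$ and $x \leftrightarrow z^3 + z*y^2$. The only mildly subtle points are the two non-vanishing claims, and both fall out immediately from the defining equations themselves.
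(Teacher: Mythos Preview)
Your proof is correct and follows essentially the same approach as the paper: in both directions you use the same substitutions $z = A.x$ and $x = \pm(z^3 + z*y^2)$, with the non-vanishing checks handled identically. The only cosmetic difference is the sign you choose for $x$ in the converse direction (you take $x = z^3 + z*y^2$ so that $A.x = -z$, whereas the paper takes $x = -(z^3 + z*y^2)$ so that $A.x = z$), which makes no material difference.
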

\begin{proof}
($\Rightarrow$) If $x$ is a non-zero solution to $x+(A.x)^3+(A.x)*y^2=0$ then $z=A.x$ is non-zero. Moreover, we have 
\begin{eqnarray*}
0&=&A(x+(A.x)^3+(A.x)*y^2)=A(x+z^3+z*y^2)\\
&=&A(x)+A(z^3+z*y^2)=z+A(z^3+z*y^2).
\end{eqnarray*}

($\Leftarrow$) If $z$ is a non-zero solution to $z+A(z^3+z*y^2)=0$, by defining $x=-(z^3+z^*y^2)$ we see that $Ax=z$. In particular, $x$ is also non-zero. Moreover,
\begin{eqnarray*}
0=x+z^3+z*y^2=x+(A.x)^3+(A.x)*y^2.
\end{eqnarray*}
\end{proof}

In Section 4 we provide experimental evidences to support these Conjectures. In Section 5.1 we give theoretical reasons to support these Conjectures. 

\section{General properties}
In the first subsection of this, we will prove the results in the introduction. In the second subsection, we show that Condition 1 is satisfied for a generic matrix of any given rank. 
\subsection{Proofs of the theorems in the Introduction}
\begin{proof}[Proof of Theorem \ref{TheoremReduction}]
We prove for example for Condition 1. By induction on $k$, we may assume that $k=n-1$. We write $z=(z',z_n)$ and $y=(y',y_n)$ where $z',y'\in \mathbb{C}^{n-1}$. The proof is completed provided we can show the following: $z_n=0$, and $(y,z,A)$ is in $V_n$ if and only if $(y',z',A_{1,1})$ is in $V_{n-1}$. In fact, look at the last equation in $z+A.(z^3+z*y^2)=0$, we find that $z_n=0$. Then the first $n-1$ equations reduce to $z'+A_{1,1}(z'^3+z'*y'^2)=0$.  We can check conversely that if $z'+A_{1,1}(z'^3+z'*y'^2)=0$  and $z_n=0$ then $z+A.(z^3+z*y^2)=0$. 

Now we consider the condition $\det (Id +\Delta [(sz+ty)^2].A)=1$ for all $s,t\in \mathbb{C}$. This condition is equivalent to that $\Delta [(sz+ty)^2].A$ is nilpotent. The matrix $\Delta [(sz+ty)^2].A$ has the block form 
\[ \left( \begin{array}{cc}
\Delta [(sz'+ty')^2].A_{1,1}&\Delta [(sz'+ty')^2].A_{1,2}\\
0&0
\end{array}\right) \]
From this we can check easily that $\Delta [(sz+ty)^2].A$ is nilpotent if and only if $\Delta [(sz'+ty')^2].A_{1,1}$ is nilpotent. Since the latter is satisfied for all $s,t\in \mathbb{C}$, it is equivalent to $\det (Id+\Delta [(sz'+ty')^2].A)=1$ for all $s,t\in \mathbb{C}$. 
\end{proof}
\begin{proof}[Proof of Theorem \ref{TheoremCubicSimilarity}]
A matrix $A$ satisfies Condition 2 iff whenever $x,y\in \mathbb{C}^n$ are such that
\begin{eqnarray*}
\det (Id +\Delta [(sAx+tAy)^2].A)&=&1,~\forall s,t,\\
x+(Ax)^3+(Ay)^2*(Ax)&=&0,
\end{eqnarray*}
then $x=0$. 

Now assume that $A$ satisfies Condition 2. Let $B$ be a matrix which is cubic similar to $A$. We will show that $B$ also satisfies Condition 2. First, the assumption that $B$ is cubic similar to $A$ implies the existence of an invertible matrix $L$ such that $(Bx)^3=L^{-1}(ALx)^3$ for all $x\in \mathbb{C}^n$. Computing the Jacobian we find that 
\begin{eqnarray*}
\Delta [(Bx)^2].B=L^{-1}.\Delta [(ALx)^2].AL
\end{eqnarray*}
for all $x\in \mathbb{C}^n$.

Now assume that $x,y\in \mathbb{C}^n$ are such that
 \begin{eqnarray*}
\det (Id +\Delta [(sBx+tBy)^2].B)&=&1,~\forall s,t,\\
x+(Bx)^3+(By)^2*(Bx)&=&0.
\end{eqnarray*}
We need to show that $x=0$. 

We look first at the determinant condition. We have, for all $s,t$
\begin{eqnarray*}
1&=&\det (Id +\Delta [(sBx+tBy)^2].B)=\det (Id+L^{-1}\Delta [(sALx+ALy)^2].AL)\\
&=&\det (Id+\Delta [(sALx+tALy)^2].A).
\end{eqnarray*}

Next we look at the system of $n$ cubic equations in $x$ and $y$. By multiplying with $L$ we have
\begin{eqnarray*}
0=Lx+L(Bx)^3+L.(By)^2*Bx=Lx+(ALx)^3+(ALy)^2*(ALx).
\end{eqnarray*} 

Therefore, $x'=Lx$ and $y'=Ly$ sastisfy
\begin{eqnarray*}
\det (Id +\Delta [(sAx'+tAy')^2].A)&=&1,~\forall s,t,\\
x'+(Ax')^3+(Ay')^2*(Ax')&=&0.
\end{eqnarray*}
Since $A$ satisfies Condition 2, it follows that $x'=0$, and hence $x=L^{-1}x'=0$ as wanted. 
\end{proof}

\begin{proof}[Proof of Theorem \ref{TheoremConnectedness}]
We prove for example 1). Let $V$ be an irreducible component of  $V_n$. We need to show that if $V\cap \{z=0\}\not=\emptyset$ then $V\subset \{z=0\}$. Assume that there is $(y,0,A)\in V$ and a sequence $(y^{(j)},z^{(j)},A^{(j)})\in V$ such that
\begin{eqnarray*}
z^{(j)}&\not=&0,~\forall j,\\
(y^{(j)},z^{(j)},A^{(j)})&\rightarrow&(y,0,A).
\end{eqnarray*} 
We will show a contradiction. 

We define, as in the proof of Lemma \ref{Lemma1}, 
$$x^{(j)}=-z^{(j)}*z^{(j)}*z^{(j)}-z^{(j)}*y^{(j)}*y^{(j)}.$$ 
Then $x^{(j)}\not= 0$ for all $j$ and $x^{(j)}\rightarrow 0$. Moreover, $x^{(j)}+(A^{(j)}x^{(j)})^3+(A^{(j)}x^{(j)}).y^{(j)}*y^{(j)}=0$ for all $j$. We can rewrite this equation as
\begin{eqnarray*}
(Id+A^{(j)}.\Delta [(A^{(j)}.y^{(j)})^2]).x^{(j)}=-(A^{(j)}.x^{(j)})^3.
\end{eqnarray*}
Since $(y^{(j)},z^{(j)},A^{(j)})\in V_n$, it follows that $\det (Id+A^{(j)}.\Delta [(A^{(j)}.y^{(j)})^2])=1$ for all $j$ (see the proof of Lemma \ref{Lemma} for more details). The fact that $(y^{(j)},z^{(j)},A^{(j)})$ converges to $(y,0,A)$ implies that the inverse matrices $(Id+A^{(j)}.\Delta [(A^{(j)}.y^{(j)})^2])^{-1}$ are bounded. From
\begin{eqnarray*}
x^{(j)}=-(Id+A^{(j)}.\Delta [(A^{(j)}y^{(j)})^2])^{-1}.(A^{(j)}x^{(j)})^3,
\end{eqnarray*}
it follows that $||x^{(j)}||\leq C||x^{(j)}||^3$ for some positive constant independent of $j$. The assumption that $x^{(j)}\rightarrow 0$ then implies that $x^{(j)}=0$ for large $j$, as wanted.
\end{proof}
\begin{proof}[Proof of Theorem \ref{TheoremGroupAction}] We prove for example 1). Let $(y,z,A)$ be in $V_n$ and $D$ an invertible diagonal matrix. We need to show that $(y',z',A')=(Dy,Dz,DAD^{-3})$ is also in $V_n$. 

First, using that $x'^3=D^3x^3$ and similarly $x'*y'^2=D^3x*y^2$ since $D$ is a diagonal matrix, we have
\begin{eqnarray*}
x'+A'.(x'^3+x'*y'^3)=Dx+DAD^{-3}(D^3x^3+D^3x*y^2)=D.(x+A.(x^3+x*y^2))=D.(0)=0.
\end{eqnarray*}

It remains to check that $\det (Id+\Delta [(sx'+ty')^2].A')=1$ for all $s,t\in \mathbb{C}$. We note that since $D$ is a diagonal matrix
\begin{eqnarray*}
\Delta [(sx'+ty')^2]=\Delta [(sx+ty)^2].D^2=D^2.\Delta [(sx+ty)^2],
\end{eqnarray*}
and hence
\begin{eqnarray*}
\det (Id+\Delta [(sx'+ty')^2].A')&=&\det (Id+\Delta [(sx+ty)^2].D^2.D.A.D^{-3})\\
&=&\det (Id+D^3.\Delta [(sx+ty)^2].A.D^{-3})\\
&=&\det (Id+\Delta [(sx+ty)^2].A)\\
&=&1
\end{eqnarray*}
for all $s,t\in \mathbb{C}$, as wanted. 
\end{proof}
\begin{proof}[Proof of Theorem \ref{Theorem1}]
1) The proof of this case will be given in the next section with the help of computer programs.

2) Assume that $A$ has rank $1$. Let $A_j$ denote the $j$-th row of $A$. Without loss of generality, we may assume that $A_1\not=0$ and $A_j=\lambda _jA_1$ for some $\lambda _j\in \mathbb{C}$ ($j=2,\ldots ,n$). For convenience, we define $\lambda _1=1$.

The equation $\det (Id+\Delta [(sz+ty)^2].A)=1$ for all $s,t\in \mathbb{C}$ becomes
\begin{eqnarray*}
\sum _{i=1}^{n}\lambda _iz_i^2a_{1,i}&=&0,\\
\sum _{i=1}^{n}\lambda _iy_i^2a_{1,i}&=&0,\\
\sum _{i=1}^{n}\lambda _iz_iy_ia_{1,i}&=&0.
\end{eqnarray*}
The equation $z+A.(z^3+z*y^2)=0$ becomes $z_i=\lambda _iz_1$ ($i=1,\ldots ,n$) together with
\begin{eqnarray*}
z_1+\sum _{i=1}^n(z_i^3a_{1,i}+z_iy_i^2a_{1,i})=0.
\end{eqnarray*}
Substituting $z_i=\lambda _iz_1$ into other equations, we obtain
\begin{eqnarray*}
z_1^2\sum _{i=1}^{n}\lambda _i^3a_{1,i}&=&0,\\
\sum _{i=1}^{n}\lambda _iy_i^2a_{1,i}&=&0,\\
z_1\sum _{i=1}^{n}\lambda _i^2y_ia_{1,i}&=&0,\\
z_1+z_1^3\sum _{i=1}^n\lambda _i^3a_{1,i}+z_i\sum _{i=1}^n\lambda _iy_i^2a_{1,i}&=&0.
\end{eqnarray*}
The first, third and fourth equations imply that $z_1=0$ and hence $z_i=0$ for all $i$.
 
3) We may assume that $A$ is upper triangular. The equation $\det (Id+\Delta [(sz+ty)^2].A)=1$ for all $s,t\in \mathbb{C}$ becomes
\begin{eqnarray*}
(1+(sz_1+ty_1)^2a_{1,1})\ldots (1+(sz_n+ty_n)^2a_{n,n})=1,~\forall s,t\in \mathbb{C}.
\end{eqnarray*}
From this, it follows that for all $i$, either $a_{i,i}=0$ or $y_i=z_i=0$. Solving the equation $z+A.(z^3+z*y^2)=0$ from bottom up, we then see that all $z_i$ are $0$.  

4) This is proven in the next subsection. 
\end{proof}
\begin{proof}[Proof of Theorem \ref{TheoremEquivalentFormulation}]
If $JC(n)$ holds then $W_n'=\{(y,0,A):~y\in \mathbb{C}^n$ and $A$ is a Druzkowski matrix$\}$, and hence is connected since the set of Druzkowski matrices is connected. (If $A$ is a Druzkowski matrix then $tA$ is also a Druzkowski matrix for any $t\in \mathbb{C}$. In particular, there is a path connecting $A$ and $0$.)

If $W_n'$ is connected, then Theorem \ref{TheoremConnectedness} (or rather, its proof) shows that $W_n'\subset \{z=0\}$ and hence $JC(n)$ holds. 

If $W_n$ is connected, then $W_n\subset \{z=0\}$, and hence so is $W_n'$.
\end{proof}

\begin{proof}[Proof of Theorem \ref{TheoremGeneric}] We give only the proof of a), since the proof of b) is identical. 

We will use the arguments and notations of the next subsection. Let $A$ be a Druzkowski matrix. Then the set consisting of all matrices of the form $DAD^{-3}$, where $D$ runs over all invertible diagonal matrices,  belongs to the same irreducible component of all Druzkowski matrices. (In fact, let $f$ be the map from the set of invertible diagonal matrices to the set of Druzkowski matrices defined by $D\mapsto DAD^{-3}$. This is a regular morphism between algebraic varieties. Since the set of invertible diagonal matrices is irreducible, it follows that there is an irreducible component $W$ of the set of Druzkowski matrices for which $f^{-1}(W)$ is the whole of invertible diagonal matrices.) We need to show only that at least one among these matrices satisfy the criteria in Theorem \ref{TheoremFurtherReduction1}. Assume otherwise. Then, in particular $A$ does not satisfy the criteria in Theorem \ref{TheoremFurtherReduction1} with respect to some $k\in \{3,\ldots ,n\}$.

Let $A_{1,1}$ be the $k\times k$ submatrix of $A$ as in the next subsection.  Then the arguments in the next section shows that $A_{1,1}$ is nilpotent, in particular is of rank $<k$. We will show that for a generic choice of the invertible diagonal matrix $D$, then $(1,\ldots ,1)^t$ does not belong to the image of $D_1A_{1,1}D_1^{-3}$ where $D_1$ is the $k\times k$ submatrix of $D$ as in the next subsection. Therefore, for such  a choice of $D$, there is no $y$ for which $(y,z=Z_k,DAD^{-3})\in V_n$, as wanted. Assume that this is not the case, we will deduce a contradiction. 

In fact, assume that for all invertible diagonal matrix $D$ then $(1,\ldots ,1)^t$ belongs to the image of $D_1A_{1,1}D_1^{-3}$. Then we see that $D_1^{-1}(1,\ldots ,1)^t$ belongs to the image of $A_{1,1}D_1^{-3}$ and hence to the image of $A_{1,1}$ for all such $D$. But the set of all such vectors $D_1^{-1}.(1,\ldots ,1)^t$ is exactly the set $\{(x_1,\ldots ,x_k)\in \mathbb{C}^k:~x_1\ldots x_k\not= 0\}$. Since $A_{1,1}$ is nilpotent, it cannot contain all of this set. This gives a contradiction as desired. 

Then the intersection of all these generic sets, when $k$ runs over all the set $\{3,\ldots ,n\}$, is still a generic set. All matrices in this intersection set satisfies the criterion in Theorem \ref{TheoremFurtherReduction1} for all $k\in \{3,\ldots ,n\}$. 

Finally, using the properties of the projections of affine algebraic varieties (in particular, the Closure Theorem in Section 6, Chapter 5 in \cite{cox-little-shea}), we conclude that there is a proper subvariety (and moreover does not contain any irreducible component) of the set of all Druzkowski matrices  outside which the criteria in Theorem \ref{TheoremFurtherReduction1} hold. 
\end{proof}

\subsection{More matrices satisfying Condition 1}
Applying Theorem \ref{TheoremGroupAction}, we can reduce the study of $V_n$ to a simpler case as follows. Let $(y,z,A)$ be in $V_n$. Let us define $w=(w_1,\ldots ,w_n)^t$ where $w_i=1$ if $z_i=0$, and $w_i=1/z_i$ otherwise. Then the diagonal matrix $D=\Delta [w]$ is invertible, $(y',z',A')=(Dy,Dz,DAD^{-3})$ is also in $V_n$ and $z'^2=z'$. 

Fix $r>0$ a positive integer. Let us choose $(y,z,A)$ an element in $V_n$ such that $z^2=z$ and $A$ is of rank $r$. After a permutation, we can assume that $z=(1,\ldots ,1,0,\ldots ,0)^t$ has the first $k$ entries to be $1$ and the last $n-k$ entries to be $0$. We write $A$ in the block form 
\[ \left( \begin{array}{cc}
A_{1,1}&A_{1,2}\\
A_{2,1}&A_{2,2}
\end{array}\right) \]
where $A_{1,1}$ is of the size $k\times k$. The set $\mathcal{E}_r$ of $n\times n$ matrices for which all minors up to dimension $r$ are non-zero is dense in the set of all matrices of rank $r$, hence we can consider only these matrices. In the condition $\det (Id +\Delta [(sz+ty)^2])=1$ for all $s,t\in \mathbb{C}$, if we choose $t=0$ we see that $\Delta [z^2].A$ is nilpotent. Then the fact that $z=(1,\ldots ,1,0,\ldots ,0)^t$ implies that $A_{1,1}$ is a nilpotent $k\times k$ matrix. Since $A\in \mathcal{E}_r$, it follows that $k\geq r$, and $A_{1,1}$ has rank exactly $r$. Next, the condition that $z+A.(z^3+z*y^2)=0$ implies in particular that $(1,\ldots ,1)^t$ is in the image of $A_{1,1}$. If $k\geq 2$ then the set of all $k\times k$ matrices $A_{1,1}$ satisfying the above two conditions is a very small set (more specifically, of high codimension) in the set of all $k\times k$ matrices of rank $r$. Here is a sketch of the proof for this claim. 
\begin{proof}
We note that since $A_{1,1}$ has rank $r$, the requirement that $A_{1,1}$ is nilpotent is described by $r$ equations, coming from that $A_{1,1}$ is nilpotent iff $A_{1,1}|_V:V\rightarrow V$ is nilpotent, where $V=$ image of $A_{1,1}$ is of dimension $r$. 

Now the condition that $(1,\ldots ,1)^t$ is in the image of $A_{1,1}$ is described by $k-r$ equations. 

All of the above equations are homogeneous in the entries of $A_{1,1}$. Hence the set of such matrices are defined by $k$ homogeneous equations in the entries of $A_{1,1}$. Then we check that these equations in fact define a codimension $k$ subvariety. 
\end{proof}
Let $D$ be an invertible diagonal $n\times n$ matrix. If $D$ has the block form
\[ \left( \begin{array}{cc}
D_1&0\\
0&D_2
\end{array}\right) \]
where $D_1$ is of size $k\times k$, then $DAD^{-3}$ has the block form
\[ \left( \begin{array}{cc}
D_1A_{1,1}D_1^{-3}&D_1A_{1,2}D_2^{-3}\\
D_2A_{2,1}D_1^{-3}&D_2A_{2,2}D_2^{-3}
\end{array}\right) \]
Hence, the orbit of all such matrices $\Lambda$ under the action of the group of diagonal matrices $D$ by $A\mapsto DAD^{-3}$, plus the permutations, is also very small. Here is  a sketch of the proof for this claim. 
\begin{proof}
In fact, since $\Lambda$  is defined by $k$ homogeneous equations and the set of all diagonal matrices $D_1$ is of dimension $k$, the total dimension of the orbit of $\Lambda$ is only $=\dim (\Lambda )+k-1$ (and not $\dim (\Lambda )+k$). Hence the orbit of $\Lambda$ is of codimension $1$ in the set of all $k\times k$ matrices of rank $r$. 
\end{proof}
The complement $\Gamma$, which satisfies Condition 1, is therefore big, and is dense in the set of matrices of a given rank $r$.  Here we illustrate the situation when $n=2$. In this case, by the same argument as that of Lemma \ref{Lemma} below, the complement of $\Gamma$ is the set of all $2\times 2$ matrices $A$ of the form $A=DA_0D^{-3}$, where $D$ is an invertible diagonal matrix and $A_0$ is the matrix
\[ \left( \begin{array}{cc}
1&-1\\
1&-1
\end{array}\right) \]
Hence the complement of $\Gamma$ is only of dimension $2$, and since the set of $2\times 2$ matrices of rank at most $1$ has dimension $3$, we see that $\Gamma$ is dense in the latter. 

We note that if $A$ is any $n\times n$ matrix whose every principal minor is non-zero, then the same is true for $DAD^{-3}$ for any invertible diagonal $n\times n$ matrix $D$. Therefore, in this case if $k>0$ then the $A_{1,1}$ in the above cannot be nilpotent. Thus we obtain a proof for part 4) of Theorem \ref{Theorem1}. 

The next Lemma deals with the remaining case $k=1$.
\begin{lemma}
Let $k=1$ or $2$. There is no $(y,z,A)$ in $V_n$ with $z=(1,\ldots ,1,0,\ldots ,0)$ where the first $k$ entries are $1$ and the last $n-k$ entries are $0$.  
\label{Lemma}\end{lemma}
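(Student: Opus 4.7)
The plan is to combine two independent consequences of $(y,z,A)\in V_n$ and play them against each other. Since the entries of $z$ lie in $\{0,1\}$, we have $z^3 = z$, so the vector $z^3 + z*y^2$ has $i$-th coordinate $1+y_i^2$ for $i\leq k$ and $0$ for $i>k$. Writing $A$ in block form with top-left $k\times k$ block $A_{1,1}$, the equation $z + A.(z^3 + z*y^2) = 0$ restricted to the first $k$ rows becomes $A_{1,1} v = -(1,\ldots,1)^t$, where $v := (1+y_1^2,\ldots,1+y_k^2)^t$. On the other side, specializing $t=0$ in the determinant condition shows that $\Delta[z^2].A$ is nilpotent, and hence so is $A_{1,1}$.

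For $k=1$ this already finishes the argument: $A_{1,1}$ nilpotent means the scalar $a_{11}=0$, while the equation demands $a_{11}(1+y_1^2) = -1$, a contradiction.

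The case $k=2$ is the main obstacle; it requires extracting one more piece of information from the determinant condition beyond the nilpotency of $A_{1,1}$. I would parametrize $A_{1,1} = \left(\begin{array}{cc} a & b \\ c & -a\end{array}\right)$ with $bc = -a^2$ (trace and determinant both zero). The extra input is the trace identity $\sum_i(sz_i+ty_i)^2 a_{ii} = 0$, which is one of the polynomial equations in $s,t$ obtained by expanding $\det(Id+\Delta[(sz+ty)^2].A)=1$ (since the elementary symmetric functions $e_k$ of the entries of $\Delta[(sz+ty)^2].A$ have pairwise distinct homogeneous degrees in $(s,t)$, each must vanish separately). Its $st$-coefficient reads $a(y_1-y_2) = 0$. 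If $a=0$ then $bc=0$, which is incompatible with $A_{1,1} v = -(1,1)^t$ (a system requiring both $b$ and $c$ nonzero). If instead $y_1 = y_2 = y$, the system collapses to $(a+b)(1+y^2) = -1 = (c-a)(1+y^2)$, forcing $c = b+2a$; substituting into $bc = -a^2$ yields $(a+b)^2 = 0$, which contradicts $(a+b)(1+y^2) = -1\neq 0$.

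The conceptual heart is therefore the $st$-coefficient of the trace identity: this single additional relation is precisely what obstructs $A_{1,1}$ from simultaneously being nilpotent and mapping $v$ to $-(1,1)^t$ in the $k=2$ case. All the remaining manipulations are routine linear algebra on $2\times 2$ matrices.
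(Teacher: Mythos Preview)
Your argument is correct. For $k=2$ it coincides with the paper's proof in its essential point: both isolate the $st$-coefficient of the trace identity $\sum_i (sz_i+ty_i)^2 a_{ii}=0$ and play it against the nilpotency of $A_{1,1}$ together with the equation $A_{1,1}v=-(1,1)^t$. The paper first multiplies the latter equation by $A_{1,1}$ (using $A_{1,1}^2=0$) to obtain $A_{1,1}(1,1)^t=0$, which pins down $A_{1,1}$ up to one parameter and reduces directly to the contradiction $d(y_1-y_2)=0$ versus $d(y_1^2-y_2^2)=1$; you instead parametrize $A_{1,1}$ by trace and determinant zero and branch on $a=0$ versus $y_1=y_2$. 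Both routes are short and equivalent in spirit.

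For $k=1$ your argument is actually more direct than the paper's. The paper finds $s$ with $(sz+y)^2=z^2+y^2$ (possible because $z=e_1$ forces only the single scalar equation $s^2+2sy_1=1$) and then invokes invertibility of $Id+A\,\Delta[(sz+y)^2]$ to force $z=0$. You simply read off $a_{11}=0$ from the nilpotency of the $1\times 1$ block and contrast it with $a_{11}(1+y_1^2)=-1$. Your version is cleaner here; the paper's version has the virtue of illustrating a substitution idea, but it is not needed for the bare statement.

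One minor wording issue: ``the elementary symmetric functions $e_k$ of the entries of $\Delta[(sz+ty)^2].A$'' should read ``of the eigenvalues'' (equivalently, the sums of principal $k\times k$ minors). The homogeneity argument you give---that the degree-$2k$ pieces must vanish separately---is correct once this is understood.
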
 
\begin{proof}
We first consider the case $k=1$.  Assume that there is $(y,z,A)$ in $V_n$ where $z=(1,0,\ldots ,0)$. Then, there is $s\in \mathbb{C}$ such that $(sz+y)^2=z^2+y^2$. From this we have
\begin{eqnarray*}
0&=&z+A.(z^3+z*y^2)=(Id+A.\Delta [z^2+y^2]).z\\
&=&(Id+A.\Delta [(sz+y)^2]).z
\end{eqnarray*}
which will imply $z=0$ provided that $Id +A.\Delta [(sz+y)^2]$ is invertible. To this end, it suffices to show that $\det (Id +A.\Delta [(sz+ty)^2])=1$ for all $s,t\in \mathbb{C}$. In fact, since $(y,z,A)$ is in $V_n$, we have that $\det (Id +D(s,t).A)=0$ for all $s,t\in \mathbb{C}$, where $D(s,t)=\Delta [(sz+ty)^2]$. The latter is the same as $D(s,t).A$ is nilpotent, that is $(D(s,t).A)^n=0$. Then, 
\begin{eqnarray*}
(A.D(s,t))^{n+1}=A.(D(s,t).A)^n.D(s,t)=0,
\end{eqnarray*}
which implies that $A.D(s,t)$ is also nilpotent for every $s,t$. This then implies that $\det (Id+A.D (s,t))=1$ for all $s,t$ as wanted. 

It remains to consider the case $k=2$. In this case, we write $A$ in the block form as
\[ \left( \begin{array}{cc}
A_{1,1}&A_{1,2}\\
A_{2,1}&A_{2,2}
\end{array}\right) \]
where 
\[ A_{1,1}=\left( \begin{array}{cc}
a&b\\
c&d
\end{array}\right) \]
is a $2\times 2$ matrix. We have as before $A_{1,1}^2=0$ and 
\begin{eqnarray*}
(1,1)^t+A_{1,1}.(1+y_{1}^2+y_2^2)^t=0.
\end{eqnarray*}
Multiplying the above system with $A_{1,1}$, using that $A_{1,1}^2=0$, we find that 
\begin{eqnarray*}
c+d=b-d=a+d=-1+d(y_1^2-y_2^2)=0.
\end{eqnarray*}

Now consider again the condition $\det (Id+\Delta [(sz+ty)^2])=0$ for all $s,t\in \mathbb{C}$. Expanding the left hand side as a polynomial in variables $s,t$, we have that its homogeneous part of degree $2$, which is $$a_{1,1}(s+ty_1)^2+a_{2,2}(s+ty_2)^2+a_{3,3}(ty_3)^2+\ldots +a_{n,n}(ty_n)^2,$$ must be $0$. In particular, consider the coefficient of the term $st$, we find that $a_{1,1}y_1+a_{2,2}y_2=-d(y_1-y_2)$ must be $0$. This contradicts the condition $-1+d(y_1^2-y_2^2)=0$ we found in the above. 
\end{proof}

The conclusion of Lemma \ref{Lemma} does not hold for the case $k=3$ and bigger, see Section 4. For Druzkowski maps, see however Sections 2 and 5.1.
\section{Results proven with the help of computer programs}

In this section we present the results we obtained with the help of computer programs. We have used two main resources: a Mathematica software run on a normal personal MacBook Air laptop and a MuPad software run on the BigMem cluster  of the Tizzard super computer of eResearch SA. For the MuPad computations, the typical configuration is  $1$ node whose memory size is about $120GB$ plus $100 GB$ virtual, and the working duration is about $4$ to $5$ days. We compute the Groebner basis of the corresponding polynomial systems, and (except Section 4.3) look to see whether $z_1,\ldots ,z_n$ appear in the Groebner Basis. 

{\bf Remark.} We will consider here Conditions 1 and 2 with an additional requirement $\det (A)=0$, because the main interest is in Druzkowski maps for which this condition is obviously satisfied. 
\subsection{The case $n=2$}
In this case, since $\det (A)=0$ we have that $A$ has rank $1$. Hence Condition 1 is satisfied.  
\subsection{The case $n=3$}
Using Mathematica, we find that Condition 1 is true for $3\times 3$ matrices whose determinant is $0$. Thus part 1 of Theorem \ref{Theorem1} is proved. 
 \subsection{The case $n=4$, rank $=2$}
For the case $n=4$, the computation requires so long time and big memory that it does not terminate on a personal computer. Hence we have to use MuPad on the super computer. We found there is $(y,z,A)$ in $V_4$ (recall the notations $V_n$ and $W_n$ from the Introduction) such that $z=(1,1,1,0)$ and $A$ has rank $2$. The time to compute the Groebner Basis was 9911 seconds, and the Groebner Basis has $179$ elements. The Groebner Basis is too complicated (it takes more than $70$ pages to print out) to extract any useful information at the moment. In particular, we cannot conclude whether there is such a counterexample with integer coefficients. 
 
Using MuPad, we check that there is no $(y,z,A)$ in $W_4$ such that $z=(1,1,1,0)$ and $A$ has rank $2$. However, the check with $z=(1,1,1,1)$ could not terminate, hence the situation for Condition 2 is still unclear for us. See, however, Section 5.1.
\subsection{Random matrices with integer coefficients}
Before we proved the results in the Subsections 3.2 and 4.3, in a previous version we used Mathematica to investigate Condition 1 on  randomly generated $4\times 4$ matrices with integer coefficients in the interval $[-25,25]$. On all of those examples we found that Condition 1 is satisfied and the Groebner Basis has $6$ elements.  

From the results in Section 3.2, we know that for a generic matrix with integer coefficients then Condition 1 is satisfied. It is still open whether Condition 1 is satisfied by all matrices with integer coefficients. 

We have used MuPad to investigate higher dimensions and ranks. Below is a summary, the matrices here are randomly generated with integer coefficients lying between $0$ and $10^{12}$: 

-  $n=4$, rank $=3$: The Groebner Basis has $8$ elements, time to compute it is $16$ seconds.

-  $n=5$, rank $=3$: The Groebner Basis has $9$ elements, time to compute it is $358$ seconds.

-  $n=5$, rank $=4$: The Groebner Basis has $15$ elements, time to compute it is $15641$ seconds.

For $n=10$, rank $=3$ or $n=7$, rank $=4$ the computations are usually terminated because of running out of time or memory, even when we restrict the entries to a smaller range. 
\subsection{Some examples of Druzkowski matrices}

We present here the experiments with some examples of Druzkowski matrices previously considered by other authors. 

{\bf Example 1.} This example is taken from \cite{gorni-zampieri}  (also page 140 in \cite{essen}), where Gorni and Zampieri developed their pairing between cubic homogeneous maps and cubic linear maps (see the citations for more details). In this example, $A$ is the following $15\times 15$ matrix 
\[ \left( \begin{array}{ccccccccccccccc}
0&0&0&0&0&0&0&0&0&0&0&0&0&0&0\\
0&0&0&0&0&0&0&0&0&0&0&0&0&0&0\\
0&0&0&-2&-1&1&1&1&0&0&-1&0&0&-1&0\\
0&0&-1&0&-1&0&1/2&0&0&1/2&0&-1/2&-1/2&0&0\\
0&0&1&-2&0&0&0&1&-1&-1&-1&0&0&0&1\\
1&0&1&-2&0&0&0&1&-1&-1&-1&0&0&0&1\\
0&1&1&-2&0&0&0&1&-1&-1&-1&0&0&0&1\\
1&0&-1&0&-1&0&1/2&0&0&1/2&0&-1/2&-1/2&0&0\\
1&0&0&-2&-1&1&1&1&0&0&-1&0&0&-1&0\\
0&1&0&-2&-1&1&1&1&0&0&-1&0&0&-1&0\\
1&0&1&0&1&0&-1/2&0&0&-1/2&0&1/2&1/2&0&0\\
0&1&-1&2&0&0&0&-1&1&1&1&0&0&0&-1\\
0&1&0&2&1&-1&-1&-1&0&0&1&0&0&1&0\\
1&1&1&-2&0&0&0&1&-1&-1&-1&0&0&0&1\\
1&1&0&-2&-1&1&1&1&0&0&-1&0&0&-1&0\\
\end{array}\right) \]
This is a Druzkowski matrix and the corresponding Keller map satisfies the so-called globally analytically linearisable condition, whose conjugations are polynomial maps. We check that $A^2=0$, $A$ is of rank $5$ and it has some non-zero $2\times 2$ principal minor. 

Using Mathematica, we check that this matrix $A$ satisfies Condition 1. The computation of the Groebner basis takes $9.62$ seconds. We observe that when trying to check  that the corresponding Druzkowski map is injective, an interesting phenomenon occurs. If we use the more obvious condition, that is $x+(Ax)^3+(Ax)*(Ay)^2=0$ implies $x=0$, then under the same setting (i.e. the same choice of monomial ordering as in the computation for Condition 1) it takes a very long time ($610.90$ seconds) to compute the Groebner basis. However, if we use the transformation in Lemma \ref{Lemma1}, that is $z+A.(z^3+z*(Ay)^2)=0$ implies $z=0$, it takes only $0.361$ seconds to compute the Groebner basis.  

{\bf Example 2.} This example is taken from page 197 in \cite{essen}. Here $A$ is the following $17\times 17$ matrix
\begin{eqnarray*}
&&[0,0,0,1/6,1/6,-1/3,-1/6,-1/6,1/3,0,0,0,0,0,0,0,1]\\
&&[0,0,0,{1}/{6},{1}/{6},-{1}/{3},-{1}/{6},-{1}/{6},{1}/{3},0,0,0,0,0,0,0,-1]\\
&&[0,0,0,1/6,1/6,-1/3,-1/6,-1/6,1/3,0,0,0,0,0,0,0,0]\\
&&[1/6,1/6,-1/3,0,0,0,0,0,0,0,0,0,0,0,0,0,1]\\
&&[-1/6,-1/6,1/3,0,0,0,0,0,0,0,0,0,0,0,0,0,1]\\
&&[0,0,0,0,0,0,0,0,0,0,0,0,0,0,0,0,1]\\
&&[0,0,0,0,0,0,0,0,0,-1/6,-1/6,1/3,1/12,1/12,-1/12,-1/12,1]\\
&&[0,0,0,0,0,0,0,0,0,-1/6,-1/6,1/3,1/12,1/12,-1/12,-1/12,-1]\\
&&[0,0,0,0,0,0,0,0,0,-1/6,-1/6,1/3,1/12,1/12,-1/12,-1/12,0]\\
&&[1/6,-1/6,-1/6,0,0,0,0,0,0,0,0,0,0,0,0,0,1]\\
&&[1/6,-1/6,-1/6,0,0,0,0,0,0,0,0,0,0,0,0,0,-1]\\
&&[1/6,-1/6,-1/6,0,0,0,0,0,0,0,0,0,0,0,0,0,0]\\
&&[1/6,1/6,-1/3,1/6,1/6,-1/3,-1/6,-1/6,1/3,0,0,0,0,0,0,0,1]\\
&&[1/6,1/6,-1/3,-1/6,-1/6,1/3,1/6,1/6,-1/3,0,0,0,0,0,0,0,-1]\\
&&[1/6,1/6,-1/3,1/6,1/6,-1/3,-1/6,-1/6,1/3,0,0,0,0,0,0,0,-1]\\
&&[1/6,1/6,-1/3,-1/6,-1/6,1/3,1/6,1/6,-1/3,0,0,0,0,0,0,0,1]\\
&&[0,0,0,0,0,0,0,0,0,0,0,0,0,0,0,0,0]
\end{eqnarray*}
It is a Druzkowski matrix of rank $5$. While $A^2\not= 0$, $A^3=0$. The corresponding Keller map is a counterexample to the cubic-linear globally linearisation condition, mentioned in Example 1.

For this example, we do not know whether it satisfies Condition 1 or not (see, however, Section 5.1). Checking the Jacobian conjecture on this example is still very quick. Computing the Groebner Basis for the system $x+(Ax)^3+(Ax)*(Ay)^2=0$ takes $0.577$ seconds, and computing the Groebner Basis for the system $z+A.(z^3+z*(Ay)^2)=0$ takes $0.337$ seconds. 

One difference between this example and other examples considered in this subsection is that it has some non-zero principal $3\times 3$ minors. We may speculate that because of this the computation of the Groebner Basis for Condition 1 takes a longer time. We also remark that the criterion in Theorem \ref{TheoremFurtherReduction1} is satisfied on this example, and the time needed to compute it is quite fast. For example, with $k=17$ then the time needed to compute the Groebner Basis for the polynomial system in Theorem \ref{TheoremFurtherReduction1} is only $0.17979$ seconds. This is what to be expected from Theorem \ref{TheoremGeneric}.

{\bf Example 3.} This example is taken from the paper \cite{adamus-bogdan-hajto}, where the authors proposed an approach toward the Jacobian conjecture. Here $A$ is the $13\times 13$ matrix 
\[ \left( \begin{array}{ccccccccccccc}
0&0&0&1/6&1/6&-1/3&-1/6&-1/6&1/3&0&0&0&1\\
0&0&0&1/6&1/6&-1/3&-1/6&-1/6&1/3&0&0&0&-1\\
0&0&0&1/6&1/6&-1/3&-1/6&-1/6&1/3&0&0&0&0\\
1/6&1/6&-1/3&0&0&0&0&0&0&0&0&1&0\\
1/6&1/6&-1/3&0&0&0&0&0&0&0&0&-1&0\\
1/6&1/6&-1/3&0&0&0&0&0&0&0&0&0&0\\
0&0&-1/3&0&0&0&0&0&0&1/6&1/6&0&1\\
0&0&-1/3&0&0&0&0&0&0&1/6&1/6&0&-1\\
0&0&-1/3&0&0&0&0&0&0&1/6&1/6&0&0\\
0&0&0&1/6&1/6&-1/3&-1/6&-1/6&1/3&0&0&1&0\\
0&0&0&1/6&1/6&-1/3&-1/6&-1/6&1/3&0&0&-1&0\\
0&0&0&0&0&0&0&0&0&0&0&0&0\\
0&0&0&0&0&0&0&0&0&0&0&0&0\\
\end{array}\right) \]
This is a Druzkowski matrix of rank $5$ and satisfies $A^2=0$. For this example, the method in \cite{adamus-bogdan-hajto} consists of showing that certain $1170$ Wronskians belong to a certain ring. The computation is quite involved and is contained in a big PDF file on the authors' website \cite{adamus-bogdan-hajto2}. 

We have checked by Mathematica that this matrix satisfies Condition 1, and the computation of the Groebner Basis takes $0.137$ seconds. Computing the Groebner Basis for the system $x+(Ax)^3+(Ax)*(Ay)^2=0$ takes $0.535$ seconds, and computing the Groebner Basis for the system $z+A.(z^3+z*(Ay)^2)=0$ takes $0.116$ seconds. 

{\bf Example 4.} This example is taken from the paper \cite{adamus-bogdan-crespo}, where the authors show that their previous results can be used to prove that some Druzkowski maps fulfilling a certain nilpotency condition satisfy the Jacobian conjecture. Here $A$ is the $5\times 5$ matrix
\[ \left( \begin{array}{ccccc}
0&a_2&a_3&a_4&a_5\\
0&0&b_3&b_4&b_5\\
0&0&0&0&0\\
0&0&0&0&0\\
0&0&0&0&0\\
\end{array}\right) \]
Since $A$ is upper triangular, it satisfies Condition 1 by Theorem \ref{Theorem1}.  

{\bf Example 5.} This example is taken from \cite{yan}. Here $A$ is the $4\times 4$ matrix
\[ \left( \begin{array}{cccc}
1&i&1&1\\
-i&1&-i&-i\\
-1&-i&1&-1\\
-1&-i&1&-1\\
\end{array}\right) \]
This is a Druzkowski matrix of rank $2$. Its characteristic polynomial is $t^2(t^2-2t+4)$, hence in particular it is not nilpotent. 

We have checked by Mathematica that this matrix satisfies Condition 1, and the computation of the Groebner Basis takes $0.0135$ seconds. Computing the Groebner Basis for the system $x+(Ax)^3+(Ax)*(Ay)^2=0$ takes $0.0138$ seconds, and computing the Groebner Basis for the system $z+A.(z^3+z*(Ay)^2)=0$ takes $0.0123$ seconds. 

{\bf Example 6.} This example is taken from \cite{gorni-zampieri}. Here $A$ is the following $16\times 16$ matrix
\begin{eqnarray*}
&&[0,0,0,0,0,0,0,-1/3,1/6,1/6,1/24,-1/24,-1/24,1/24,0,0]\\
&&[0,0,0,0,0,0,0,-1/3,1/6,1/6,1/24,-1/24,-1/24,1/24,0,-1]\\
&&[1/3,-1/6,-1/24,1/24,-1/6,1/24,-1/24,0,0,0,0,0,0,0,1,-1]\\
&&[1/3,-1/6,-1/24,1/24,-1/6,1/24,-1/24,0,0,0,0,0,0,0,-1,-1]\\
&&[0,0,0,0,0,0,0,-1/3,1/6,1/6,1/24,-1/24,-1/24,1/24,0,1]\\
&&[1/3,-1/6,-1/24,1/24,-1/6,1/24,-1/24,0,0,0,0,0,0,0,1,1]\\
&&[1/3,-1/6,-1/24,1/24,-1/6,1/24,-1/24,0,0,0,0,0,0,0,-1,1]\\
&&[1/3,-1/6,-1/24,1/24,-1/6,1/24,-1/24,0,0,0,0,0,0,0,0,0]\\
&&[1/3,-1/6,-1/24,1/24,-1/6,1/24,-1/24,0,0,0,0,0,0,0,1,0]\\
&&[1/3,-1/6,-1/24,1/24,-1/6,1/24,-1/24,0,0,0,0,0,0,0,-1,0]\\
&&[0,0,0,0,0,0,0,-1/3,1/6,1/6,1/24,-1/24,-1/24,1/24,1,-1]\\
&&[0,0,0,0,0,0,0,-1/3,1/6,1/6,1/24,-1/24,-1/24,1/24,-1,-1]\\
&&[0,0,0,0,0,0,0,-1/3,1/6,1/6,1/24,-1/24,-1/24,1/24,-1,-1]\\
&&[0,0,0,0,0,0,0,-1/3,1/6,1/6,1/24,-1/24,-1/24,1/24,1,1]\\
&&[0,0,0,0,0,0,0,-1/3,1/6,1/6,1/24,-1/24,-1/24,1/24,-1,1]\\
&&[0,0,0,0,0,0,0,0,0,0,0,0,0,0,0,1]\\
&&[0,0,0,0,0,0,0,0,0,0,0,0,0,0,0,0]\\
\end{eqnarray*}
It is a Druzkowski matrix of rank $4$. As in Example 2, while $A^2\not= 0$, $A^3=0$. The corresponding Keller map still satisfies the cubic-linear globally linearisation condition, mentioned in Examples  1 and 2. However, in contrast to Example 1, here the conjugations are not polynomials.

We have checked by Mathematica that this matrix satisfies Condition 1, and the computation of the Groebner Basis takes $0.418$ seconds. Computing the Groebner Basis for the system $x+(Ax)^3+(Ax)*(Ay)^2=0$ takes $3.198$ seconds, and computing the Groebner Basis for the system $z+A.(z^3+z*(Ay)^2)=0$ takes $0.557$ seconds. In this example, in contrast to the previous ones, computing the Groebner Basis for Condition 1 is faster than that of the system $z+A.(z^3+z*(Ay)^2)=0$, but the time difference is not quite significant.  

{\bf Remark.} When we apply Theorems \ref{TheoremFurtherReduction} and \ref{TheoremFurtherReduction1} to these examples, the computation of the Groebner Bases is also very quick. Describing the Groebner Basis (which is $\{1\}$ in these cases) in terms of the original defining polynomials is also very quick. In fact, for some values of $k$, in some of these systems $1$ already appears as one of the original defining polynomials. 

\section{The  approaches and conclusions}

We propose in this section several approaches towards resolving the Jacobian conjecture and detailed strategies for implementing them. We recall here the notation used in the statement of Theorem \ref{TheoremFurtherReduction}: given $n$ and $k\in \{3,\ldots ,n\}$, we denote by $Z_k=(1,\ldots ,1,0,\ldots ,0)\in \mathbb{C}^n$ the vector whose first $k$ coordinates are $1$ and whose last $n-k$ coordinates are $0$. We start with some comments on the effectiveness of the approaches proposed and the comparisons to several existing approaches of other authors. 

\subsection{The effectiveness of Conditions 1 and 2 and comparisons to existing approaches}  In this subsection we explain theoretically why the Conditions 1 and 2, and Theorems  \ref{TheoremInjectiveCriterion}, \ref{TheoremFurtherReduction} and \ref{TheoremFurtherReduction1} are quite effective in practice. We also compare our approach to several existing approaches. 

In many of the existing approaches, a common theme is to construct the inverse of a given Keller map. Usually, this is  achieved by writing out a formula for the inverse and giving a bound for the degree of the inverse. The bound is then translated into a corresponding system of polynomials. For example, the derivation criterion Proposition \ref{PropositionDensity} is of this nature, and the bound there is $d^{n-1}$ which grows exponentially. For a Druzkowski map in dimension $n$ this bound is $3^{n-1}$ which is quite big even if $n$ is small. There are recent approaches (for example \cite{adamus-bogdan-hajto}) which reduce the complexity, however as illustrated in Example 3 in Section 4.5, in practice the computations following this "finding inverse" approach are still quite large. Since these approaches aim to apply for all polynomial automorphisms (a very big and largely unknown set) and to find inverses of these (a very difficult task), it can be expected from simple speculations that they will require a lot of time and efforts when applied to real situations.

Our approach is to check instead only the injectivity and does not try to construct the inverse map. For a general map, checking whether it is injective may be not any easier than finding its inverse. However, this turns out to be very suitable for Druzkowski maps, because of special properties of Linear Algebra. Here, given an $n\times n$ matrix $A$,  we only need to check that a system of $n$ equations of degree $3$ in $2n$ variables $y_1,\ldots ,y_n$ and $z_1,\ldots ,z_n$ has a very simple solution set, meaning $\{z_1=\ldots =z_n=0\}$. Moreover, the application of the simple transformation in Lemma \ref{Lemma1} turns out to be very essential in order to save time in practical situations. The work we need to do is reduced and since the solution set is very simple, we expect that it is quite effective to use in practice. The results in Section 4.5 illustrate this point.  (Note that previously Yaghzev also discussed the injectivety of a cubic homogeneous polynomial map in \cite{yaghzev}. However, since the class of cubic homogeneous polynomials is still very big, there were not as many simplifications as in the case of cubic linear maps.)  

In the same vent (and actually with more evidences as will be shown), we can also see that applying Theorems \ref{TheoremFurtherReduction} and \ref{TheoremFurtherReduction1} are quite effective in practice. In fact given a matrix $A$ and a special value $z=Z_k$, the system we need to check for Theorem \ref{TheoremFurtherReduction} consists of only $n$ equations (and it is $n^2+n$ equations for Theorem \ref{TheoremFurtherReduction1}) in $n$ variables $y_1,\ldots ,y_n$.  The solution set we look for is the simplest possible, that is the empty set. Moreover, a generic Druzkowski matrix satisfies the criteria in these theorem automatically, as shown in Theorem \ref{TheoremGeneric}. All these points come together to help us to solve these systems very quickly in practice, as illustrated in Section 4.5. 

 Here we note one other feature, that of the generality of the approaches. Most of the existing approaches are for Keller maps only, because they consist in constructing inverse maps. So while these approaches are useful in that they not only check that a given Keller map has an inverse but also construct that inverse, they are not intended for maps which are not invertible. The Mathieu subspaces approach in the paper \cite{derksen-essen-zhao}  is one of the rare generalisations of the Jacobian conjecture which still holds until now. Our approaches here point out some more possible generalisations of the Jacobian conjecture: one is that Condition 1 is true for all Druzkowski matrices, one other is that Conditions 1 and 2 are true for all matrices with integer coefficients and yet another one is that $W_n$ is connected. Like in the paper \cite{derksen-essen-zhao}, we show that Condition 1 is true for many maps that are not polynomial automorphisms. Moreover, we show that Condition 1 is true for a dense set of square matrices, and this enables us to propose a heuristic argument for the truth of the Jacobian conjecture. Finally, we point out one essential difference between the Mathieu subspaces approach in \cite{derksen-essen-zhao} and ours. The Mathieu subspaces approach is probabilistic in nature, hence seems not easy to employ computer programs to study (and there is no such attempt as far as we know). Our approach is algebraic in nature, and hence is very easy to employ computer programs. 
 
{\bf Remark.} Besides the experimental evidences for the truth of the generalisations of the Jacobian conjecture mentioned above, here we give some theoretical evidences for them. 

First, for the conjecture that all Druzkowski matrices should satisfy Condition 1. The first evidence for it is that we derived Condition 1 from the assumption that the Jacobian conjecture is true for Druzkowski matrices and the observation that $y$ appears in the polynomial equations defining the corresponding system only through $Ay$. Moreover, we showed that Condition 1 is satisfied by many $n\times n$ matrices, and is invariant under the action of invertible diagonal matrices via $A\mapsto D.A.D^{-3}$. We note also that Hubbers \cite{hubbers} showed that any Druzkowski matrix in dimension $5$ is cubic similar to an upper triangular matrix, and hence we expect that Condition 1 should be at least true for Druzkowski matrices up to dimension $5$. (Note that, unlike Condition 2, Condition 1 is not known to be an invariant for cubic similarity.)  

Second, for the conjecture that all $n\times n$ matrices should satisfy Condition 2. We know that Condition 1, and hence also Condition 2, is satisfied for many $n\times n$ matrices. Moreover, we know that Condition 2 is invariant under cubic similarity. The latter property enlarges the set of matrices satisfying Condition 2. 

 \subsection{Approach 0: Checking Theorems \ref{TheoremFurtherReduction} and \ref{TheoremFurtherReduction1} with a generic Druzkowski map} This is the approach that we find most practical and is based on the arguments in Section 5.1. The idea is as follows. As pointed out in Section 5.1, a generic Druzkowski map satisfies the criteria in Theorem \ref{TheoremGeneric}, hence we expect to be able to check very quickly that the Groebner Basis for the system $Z_k+A.(Z_k^3+Z_k*(A.y)^2)=0$ (or even the stronger system $Z_k+A.(Z_k^3+Z_k*y^2)=0$) is $\{1\}$. Then, we will be able to write $1$ in terms of the $n$ polynomials defining the system. If we have done this for a big enough number of Druzkowski matrices in a certain dimension $n$, we will be able to write down (using extrapolation) $1$ in terms of the $n$ polynomial for a {\bf general} Druzkowski matrix of that given dimension $n$. 
 
 The difficulty here is that we do not know much about Druzkowski matrices in higher dimensions. However, we do not really need to know all Druzkowski matrices in a certain dimension. What we really need is to be able to generate a (finite) "good" set of such Druzkowski matrices. Then, for each of Druzkowski matrices in this finite set we expect to be able to check very quickly. This is very promising, for example we can generate many such maps by starting from some classes of polynomial automorphisms (there are many good such ones, for example the tame class) and then going through the procedures given by Bass, Connell, Wright, Yagzhev and Druzkowsk to produce Druzkowski matrices. 
 
\subsection{Approach 1: Directly checking Condition 1 or the Jacobian conjecture for Druzkowski matrices}  This is similar to the Approach 0, the difference is that we consider all Druzkowski matrices instead of only generic ones. Here is the strategy we propose to make use of the help of computer programs in settling the Jacobian conjecture. The Groebner Basis computation yields not only  the result that $\{1\}$ is the Groebner Basis of the system but also how we can describe it in terms of the polynomials in the original defining equations. The strategy is to study  these expressions of $1$ in terms of the original polynomials for small dimensions $n$ and from that give an inductive guess for what it will be in higher dimensions. To this end, the explicit equations given in \cite{gorni-gasinska-zampieri} may be useful. We can also work with many specific examples to get the idea. The computation for the mentioned expressions have been implemented in some computer programs (for example Mathematica and Maple), however it seems not available in MuPad.

One difficulty with this approach is that the number of equations defining the Druzkowski matrices is very big, which makes it difficult to check even for small dimensions.  
 
 \subsection{Approach 2: Checking Conditions 1 and 2 on a larger variety} For Druzkowski matrices, Condition 2 is equivalent to the Jacobian conjecture. From the experimental computations above, it seems that the stronger Condition 1 may be also true for Druzkowski matrices. We also know that Condition 1 is true for many other varieties. So if we assume that the Jacobian conjecture is true, then there are varieties strictly containing the Druzkowski matrices on which Condition 2 (and may be also Condition 1) holds. If we can find a "nice" such variety (for example if it is defined by less equations), then we can facilitate the use of computer programs. To guess what should be such nice varieties, we may again  do experiments with small dimensions or specific matrices. The ideal case is that Condition 2 is true for all matrices.  

See the Remark in Section 5.1 for support to this approach.  

 \subsection{Approach 3: Checking Conditions 1 and 2 for integer matrices} Here is another approach, which we feel is also promising. We know that it is sufficient to check either Condition 1 or 2 is true for all integer matrices. This is in general a hard problem belonging to the field of Diophantine equations, but we may go around it in the following way. We can compute the projections from $V_n\cap \{z=Z_k\}$ (or $W_n\cap \{z=Z_k\}$) to the set of all $n\times n$ matrices $\mathcal{M}_n$. Since this image is contained in a strictly smaller subvariety of $\mathcal{M}_n$ (Theorem \ref{Theorem1}),  the closure of the image is also contained in the same strictly smaller subvariety. The ideal defining the latter (which then must be bigger than ${0}$) can be computed explicitly (\cite{cox-little-shea}),  and we can look to see whether it contains some special polynomials $P$ which is always nonzero on rational numbers (in particular, if it is always positive on real numbers). (More generally, we can check whether there is a polynomial $P$ whose zero set is easy to study.) We have searched on randomly generated matrices of various dimensions and ranks, and did not yet find any counterexample to this approach. (Of course, if there are such counterexamples, they must be very rare and would almost never appear when we choose randomly, by results in Section 3.2). 

 \subsection{Approach 4: Checking Condition 1 for a dense subset of Druzkowski matrices} In case Approach 1 does not work (say, simply because we have not enough experimental computations to make a reasonable guess), we may also check Condition 1 on a dense set of Druzkowski matrices as follows. We consider the projection of  $V_n\cap \{z=Z_k\}$ to $\mathcal{M}_n$. This image is not dense in the set of Druzkowski if its closure is not. Then we check that the closure of the complement of this latter set in the set of Druzkowski matrices is the Druzkowski matrices themselves. There are algorithms for doing this using  quotients of ideals \cite{cox-little-shea}. Alternatively, we can proceed in the manner outlined in Approach 0. 

\subsection{Approach 5: Proving inductively on $k$} In the above approaches, we proposed to proving the Jacobian conjecture inductively on either the dimension $n$ or the rank $r$ of the matrix. From Theorems \ref{TheoremFurtherReduction} and \ref{TheoremFurtherReduction1}, there is also one other approach that we can utilise that is of inductively proving on $k$. This approach seems to be quite natural. Right now we know that for $k=1$ or $2$, then the approach works. From the experimental computations, it seems that for $k=3$ the approach also works. How about $k=4$ and higher? Again, we propose that computer programs will be used to help with this. 

\subsection{Approach 6: Checking that $W_n'$ or $W_n$ is connected} This approach is based on Theorem \ref{TheoremEquivalentFormulation}, which says that $JC(\infty )$ is equivalent to the fact that $W_n'$ is connected for all $n\in \mathbb{N}$. In particular, if $W_n$ is connected for all $n\in \mathbb{N}$ then $JC(\infty )$ follows. Connectedness of a variety is a classical subject in Algebraic Geometry, and in principle it is easier to check whether a variety is connected than to compute the Groebner Basis of the ideal defining it. The fact that $W_n$ is defined by only $n+n^2$ equations makes the computations faster. 

\subsection{A related problem} To prove $JC(\infty )$ it suffices to prove for generic Druzkowski matrices. It is hence an important task to be able to generate more Druzkowski matrices so we can test the approaches. If we are able to do so, we can use Theorems \ref{TheoremFurtherReduction} and \ref{TheoremFurtherReduction1} to effectively check. However, this is a difficult problem, as mentioned before. Among the approaches we propose here, the Approaches 2,3 and 6 are not affected by not having a good way to generate Druzkowski matrices, since they concern more general matrices.  

 \subsection{Conclusions} We have given some simplifications to the polynomial systems defining the Druzkowski matrices and the injectivity of the corresponding Druzkowski maps. Based on these, we have proposed some conditions on square matrices which help to solve the Jacobian conjecture and proven that they are true in various cases, with the help of computer programs. The experimental computations show that these simplifications save a lot of computing time in practice, and point out that a stronger Jacobian conjecture seems to be true for Druzkowski maps. The criteria in some of these approaches are shown to be satisfied by a generic Druzkowski map, which is a good sign that our approaches seem very promising. The approaches we proposed are to use computer programs to investigate various special cases to guess an inductive argument. Of course, it may happen that even after we have the data for small dimensions at hand, we still have no clue of how to process it. But even in that case, it is certain that we understand the Jacobian conjecture a little bit more.   
 
It is hopeful that more experimental computations will be achieved. At the moment, only MuPad is available on Tizzard, and it is interactive and sequential, hence not appropriate for parallel computing. A program which can be run in parallel and has more options (for example, the option to express the Groebner Basis in terms of the original defining polynomials seems not available in MuPad) will help to significantly improve  the outcomes of our investigation. Our aim is, for the near future, to at least be able to obtain a proof of the Jacobian conjecture for Druzkowski maps  in small dimensions. Of particular interest are the first unknown cases, like in dimension $9$ or $10$.

Ongoing experimental computations are being performed and the results will be updated when they are available. 

{\bf Remarks.}

1) To show only that the Jacobian conjecture is true for Druzkowski maps of a specific dimension (for example if we want more evidences to support the Jacobian conjecture), we can use other methods instead of that of Groebner Basis. For example, we can simply compute the dimension of the intersection of the variety $V_n\cap \{z=Z_k\}$ and the Druzkowski matrices. For this sole purpose, there are alternative algorithms which may perform faster than computing Groebner Basis, for example:  primary decomposition, the characteristic set method of Reid and Wu, resultants and so on.  However, it does not seem that knowing only that the set is empty will help us in solving the Jacobian in general. It is, in our opinion, the expression of the constant $1$ in terms of the original defining polynomials that is the most essential.  

2) For the case of  Druzkowski maps in dimension $9$, there is an alternative approach toward the Jacobian conjecture. As mentioned in the Introduction, from the results in \cite{yan} it is sufficient to show that the set of Druzkowski matrices having no zero entries on the diagonal is dense in the set of all Druzkowski maps. To this end, there are algorithms to deal with, as mentioned in Section 5.6 above. So we can either try to prove this fact or use computer programs to check. (We note that the claim that the set of matrices whose all diagonal entries are non-zero should be dense in the set of Druzkowski matrices, while very reasonable, is not as obvious as it may seem at first. A similar claim is false: Rusek \cite{rusek} showed that the matrices of corank exactly $1$ is not dense in the set of Druzkowski matrices.) Here is a strategy of using cubic similarity to prove this density claim. We start from a Druzkowski matrix in dimension $\leq 9$, and then proceed to show that two alternative cases happen. Either $A$ is cubic similar to another matrix satisfying the conditions to apply the results in \cite{yan}, or $A$ is so special that it obviously satisfies the Jacobian conjecture.  
 
 \section{Appendix: Mathematica codes}

  \subsection{Mathematica codes for an explicit Druzkowski matrix} In this subsection we include the Mathematica codes to check whether a given Druzkowski matrix satisfies Condition 1 or Condition 2. 
 
 ClearAll["Global`*"]
 
 (*User enters the dimension and the rank*)
 
 n = 
 
r = 

(*User enters the matrix A*)

A=$\{\{\}\}$
 
 A // MatrixForm
 
 MatrixRank[A]
 
 Dimensions[A]
 
 CP = CharacteristicPolynomial[A, t]
 
 (*Variables, I use xx instead of z, and yy instead of y *)
 
 xx = Table[Subscript[x, i], $\{i, n\}$]
 
yy = Table[Subscript[y, i], $\{i, n\}$]
 
(*Compute det [Id +Delta [(sx+ty)*(sx+ty)] .A], and pick out the equations for that to be 1 for all s,t*) 
 
 tt = Table[Subscript[t, i], $\{i, n\}$]
 
ZZ1 = DiagonalMatrix[tt].A

ZZ2 = IdentityMatrix[n] + ZZ1

Detn = -1 + Det[ZZ2] - Det[ZZ1];

SquareLxy = (s*xx + t*yy)*(s*xx+t*yy)

Detxy = Detn /. Table[tt[[i]] $->$ SquareLxy[[i]], $\{i, n\}$]; // Timing

DetA = Det[A]; // Timing

MonomialListDetxy = MonomialList[Detxy, $\{t, s\}$]; // Timing

Length[MonomialListDetxy]

IdealDetxy = MonomialListDetxy /. t $->$ 1 /. s $->$ 1; // Timing

Length[IdealDetxy]

IdealDetxy = Union[IdealDetxy, $\{DetA\}$]; // Timing

Length[IdealDetxy]
 
(*The equations for the injectivity. The first is for Condition 1, the last two are for Condition 2. For Condition 2 we use two different types: the more direct one and the one after transformation*)

Equations = xx + A.(xx*xx*xx + xx*yy*yy)

Equations1 = xx + (A.xx)*(A.xx)*(A.xx) + (A.xx)*(A.yy)*(A.yy)

Equations2 = xx + A.(xx*xx*xx) + A.(xx*(A.yy)*(A.yy))

(*Total equations. For Condition 1, we need to add the IdealDetxy. For Condition 2, IdealDetxy is automatically satisfied if we start with a Druzkowski matrix. If A is not Druzkowski, we need to add IdealDetxy in.*)

TotalEquations = Union[IdealDetxy, Equations]; // Timing

Length[TotalEquations]

TotalEquations1 = Equations1; // Timing

Length[TotalEquations1]

TotalEquations2 = Equations2; // Timing

Length[TotalEquations1]

(*Find Groebner Bases to check Conditions 1 and 2. Look to see x1,\ldots, xn in the bases.*)

GB = GroebnerBasis[TotalEquations, Union[xx, yy]]; // Timing

Print[GB]

GB1 = GroebnerBasis[TotalEquations1, Union[xx, yy]]; // Timing

Print[GB1]

GB 2= GroebnerBasis[TotalEquations2, Union[xx, yy]]; // Timing

Print[GB2]

(*Check Theorem  \ref{TheoremFurtherReduction1}. Look to see the Groebner Basis is $\{1\}$. User enters a value of k between 3 and n, but  this can be also done automatically*)
 
 (*User enters value for k*)
 
 k=
 
 xS = Table[0, $\{i, n\}$]
 
 Do[xS[[i]] = 1, $\{i, k\}$]
 
Print[xS]
 
GB3 = GroebnerBasis[Union[TotalEquations /. Table[xx[[i]] $->$ xS[[i]], $\{i, n\}$],$\{\}$], Union[xx,yy]]; // Timing

Print[GB3]

(*If we want to describe the Groebner Basis in terms of the original defining polynomials we use instead the following codes. Then we printout the equations in this case to check.*)

$\{$GB3,cofactors3 $\}$ = GroebnerBasis`BasisAndConversionMatrix[Union[TotalEquations /. Table[xx[[i]] $->$ xS[[i]], $\{i, n\}$],$\{\}$], Union[xx,yy]]; // Timing

Print[GB3]

Print[cofactor3]

TotalEquations /. Table[xx[[i]] $->$ xS[[i]], $\{i, n\}$]
 
 (*Automatic code*)
 
 xS=Table[0,$\{i,n\},\{j,n\}$]
 
 Do[xS[[i,j]]=1,$\{i,n\},\{j,i\}$]
 
 GB4 = Table[List[], $\{i,n\}$]

Do[GB4[[i]] = GroebnerBasis[TotalEquations /. Table[xx[[j]] $->$ xS[[i, j]], $\{j, n\}$],Union[xx, yy]], $\{i, n\}$]; // Timing

Print[GB4]
 
  \subsection{Mathematica codes for a general Druzkowski matrix} The Mathematica codes for this case, while similar to that for a specific matrix, are different in several aspects:
  
a) First, the matrix A is not entered by a user but is implemented automatically. 

FirstRows = Table[Subscript[a, i, j], $\{i, r\}$, $\{j, n\}$]

Coefficients = Table[Subscript[b, i, j], $\{i, n - r\}$, $\{j, r\}$]

A = Table[0, $\{i, n\}, \{j, n\}$]

Do[A[[i]] = FirstRows[[i]], $\{i, r\}$]

Do[A[[r + i]] = Sum[Coefficients[[i, j]]*FirstRows[[j]], $\{j, r\}], \{i, n - r\}$]

Print[A]

Consequently, we need to collect the variables to be used in the Groebner Basis computation. 

ListA = List[]

Do[ListA = Union[ListA, $\{$FirstRows[[i, j]]$\}$], $\{i, r\}, \{j, n\}$]

Do[ListA = Union[ListA, $\{$Coefficients[[i, j]]$\}$], $\{i, n - r\}, \{j, r\}$]

Length[ListA]

Print[ListA]

b) Second, we need to add in the equations for the Druzkowski maps: det (Id +Delta [(A.x)*(A.x)].A)=1 for all $x$.

SquareLxy1 = (A.xx)*(A.xx)

Detxy1 = -1 + Detn /. Table[tt[[i]] $->$ SquareLxy1[[i]], $\{i, n\}$]; // Timing
 
 MonomialListDetxy1 = MonomialList[Detxy1, xx]; // Timing
 
 IdealDetxy1 = MonomialListDetxy1 /. Table[xx[[i]] $->$ 1, $\{i, n\}$]; // Timing
 
 c) Third, in the computation of Groebner Bases, we add ListA into the list of variables Union[xx,yy].

\end{document}